\newtheorem{theorem}{Theorem}[section]
\newtheorem{proposition}[theorem]{Proposition}
\newtheorem{lemma}[theorem]{Lemma}
\theoremstyle{rem}
\theoremstyle{definition}
\newtheorem{definition}[theorem]{Definition}
\theoremstyle{construct}
\theoremstyle{examp}
\newtheorem{examp}[theorem]{Example}
\newcommand\projective\mathbf
\newcommand\PP{\projective P}
\newcommand\OO{\mathcal O}
\newcommand\ZZ{\mathbb Z}
\newcommand\onto\twoheadrightarrow
\newcommand\lra\longrightarrow
\newcommand\dar\downarrow
\DeclareMathOperator{\pic}{Pic}
\DeclareMathOperator{\im}{im}
\DeclareMathOperator{\cok}{coker}
\DeclareMathOperator{\rk}{rank}
\begin{document}

\title{Vector Bundles associated to Monads on Cartesian Products of Projective Spaces}
\author{Damian M Maingi}
\date{July, 2021}
\keywords{Vector bundles, Maximal rank, Monads}

\address{Department of Mathematics\\College of Science\\
\\Sultan Qaboos University\\ P.O Box 50, 123 Muscat, Sultanate of Oman}
\email{dmaingi@squ.edu.om}

\maketitle

\begin{abstract}
In this paper we construct vector bundles associated to monads  on $X=\PP^n\times\PP^n\times\PP^m\times\PP^m$.
We first establish the existence of such monads on $X$. Once the monads exist, the next natural question is whether the 
cohomology vector bundle associated to these monads are simple or not.
We study these vector bundles associated to monads and prove their stability and simplicity.
\end{abstract}

\section{Introduction}
\noindent In Algebraic Geometry, one very interesting problem deals with the existence of indecomposable low rank
vector bundles on algebraic varieties. One very popular technique to construct vector bundles is via monads which 
appear in many contexts within algebraic geometry. Monads were first introduced by Horrocks,\cite{4} who showed  
that all vector bundles $E$ on $\PP^3$ could be obtained as the cohomology bundle of a monad of the following kind:
\[
\xymatrix
{
0\ar[r] & \oplus_i\OO_{\PP^3}(a_i) \ar[r]^{A} & \oplus_j\OO_{\PP^3}(b_j) \ar[r]^{B} & \oplus_n\OO_{\PP^3}(c_n) \ar[r] & 0
}
\]
where $A$ and $B$ are matrices whose entries are homogeneous polynomials of degrees $b_j-a_i$ and $c_n-b_j$ respectively for some integers $i,j,n$.\\
\\
The first problem is to tackle the existence of monads. Fl\o{}ystad in \cite{2} gave sufficient and necessary conditions for the existence of monads over the projective space. 
Costa and Miro-Roig in \cite{1} extended these results to smooth quadric hypersurfaces of dimension at least 3.
Marchesi, Marques and Soares in \cite{7} generalized Fl\o{}ystad's theorem to a larger set of varieties.\\
\\
In this paper we first establish the existense of monads
\[\begin{CD}0@>>>{\OO_X(-1,-1,-1,-1)^{\oplus k}} @>>^{f}>{\mathscr{G}_n\oplus
\mathscr{G}_m}@>>^{g}>\OO_X(1,1,1,1)^{\oplus k} @>>>0\end{CD}\]
on $X=\PP^n\times\PP^n\times\PP^m\times\PP^m$ where
$\mathscr{G}_n:=\OO_X(0,-1,0,0)^{\oplus n+k}\oplus\OO_X(-1,0,0,0)^{\oplus n+k}$ and $\mathscr{G}_m:=\OO_X(0,0,-1,0)^{\oplus m+k}\oplus\OO_X(0,0,0,-1)^{\oplus m+k}$.
We then prove stability of the kernel bundle $\ker g$ and finally prove that the cohomology vector bundle, $\ker g/\im f$ is simple.

\section{Preliminaries}

\begin{definition}
Let $X$ be a non-singular irreducible projective variety of dimension $d$ and let $\mathscr{L}$ be an ample line bundle on $X$. For a 
torsion-free sheaf $F$ on $X$ we define
\begin{enumerate}
 \item the degree of $F$ relative to $\mathscr{L}$ as $\deg_{\mathscr{L}}F:= c_1(F)\cdot \mathscr{L}^{d-1}$, where $c_1(F)$ is the first chern class of $F$
 \item the slope of $F$ as $\mu_{\mathscr{L}}(F):= \frac{c_1(F)\mathscr{L}^{d-1}}{rk(F)}$ and
 \end{enumerate}
\end{definition}

\begin{definition}
Let $X$ be an algebraic variety and let $E$ be a torsion-free sheaf on $X$. Then $E$ is $\mathscr{L}$-stable
if every subsheaf $F\hookrightarrow E$ satisfies $\mu_{\mathscr{L}}(F)<\mu_{\mathscr{L}}(E)$, where $\mathscr{L}$ is an ample invertible sheaf.
\end{definition}

\subsection{Hoppe's Criterion over cyclic varieties.}
Suppose that the picard group Pic$(X) \simeq \ZZ$ such varieties are called cyclic.
Given a locally free sheaf (or, equivalently, a holomorphic vector bundle) $E\rightarrow X$, 
there is a unique integer $k_E$ such that $-r + 1 \leq c_1(E(-k_E)) \leq 0$.
Setting $E_{norm} := E(-k_E)$, we say $E$ is normalized if $E = E_{norm}$. 
Then one has the following stability criterion:

\begin{proposition} [\cite{3}, Lemma 2.6]
Let $E$ be a rank $r$ holomorphic vector bundle over a cyclic projective variety $X$. 
If $H^0((\bigwedge^q E)_{norm}) = 0$  for  $1\leq q\leq r-1$, then E is stable.
If $H^0((\bigwedge^q E)_{norm}(-1)) = 0$ for $1\leq q \leq r-1$, then E is semistable.
\end{proposition}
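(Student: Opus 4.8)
The plan is to prove the contrapositive of each implication: if $E$ is not $\mathscr{L}$-stable I will exhibit a nonzero section of some $(\bigwedge^q E)_{norm}$ with $1\le q\le r-1$, and if $E$ is not semistable a nonzero section of some $(\bigwedge^q E)_{norm}(-1)$. Throughout, let $\mathscr{L}=\OO_X(1)$ be the ample generator of $\pic(X)\cong\ZZ$ and let $X$ be smooth (as elsewhere in the paper). The use of cyclicity is that every line bundle on $X$ is $\OO_X(a)$ for a unique $a\in\ZZ$, so for a torsion-free sheaf $G$ of rank $t$ one may set $\det G:=(\bigwedge^t G)^{\vee\vee}=\OO_X(a_G)$; then $\mu_{\mathscr{L}}(G)=\tfrac{a_G}{t}\,\mathscr{L}^{d}$ with $\mathscr{L}^{d}>0$, so comparing slopes of subsheaves of $E$ reduces to comparing the rational numbers $a_G/t$.

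Suppose $E$ is not stable, so there is a subsheaf $0\ne F\subsetneq E$ with $\mu_{\mathscr{L}}(F)\ge\mu_{\mathscr{L}}(E)$. Replacing $F$ by its saturation in $E$ (the preimage of the torsion subsheaf of $E/F$) keeps the rank and can only increase the slope, so we may assume $E/F$ is torsion-free; put $s=\rk F\in\{1,\dots,r-1\}$, $a=a_F$, $e=a_E$, so that $a/s\ge e/r$. The geometric heart of the argument is to produce an injection $\det F\hookrightarrow\bigwedge^{s}E$. Over the open set $U\subseteq X$ on which $E/F$ is locally free --- whose complement has codimension $\ge 2$, precisely because $E/F$ is torsion-free on the smooth $X$ --- the inclusion $F|_U\hookrightarrow E|_U$ is a subbundle inclusion, so applying $\bigwedge^{s}$ gives a subbundle inclusion $\det(F|_U)\hookrightarrow\bigwedge^{s}E|_U$. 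Since $\det F$ is an honest line bundle on $X$ (smoothness makes the reflexive rank-$1$ sheaf $(\bigwedge^{s}F)^{\vee\vee}$ invertible) and restricts to $\det(F|_U)$ on $U$, while $\bigwedge^{s}E$ is locally free, the morphism extends uniquely across the codimension-$\ge 2$ locus, and the extension stays injective. Writing $\det F=\OO_X(a)$, the inclusion $\OO_X(a)\hookrightarrow\bigwedge^{s}E$ is exactly the statement $H^0(\bigwedge^{s}E(-a))\ne 0$.

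To conclude I would pin down the normalization. From $c_1(\bigwedge^{s}E)=\binom{r-1}{s-1}c_1(E)$, $\rk\bigwedge^{s}E=\binom rs$, and $\binom{r-1}{s-1}/\binom rs=s/r$, the defining inequality $-\binom rs+1\le c_1((\bigwedge^{s}E)_{norm})\le 0$ forces $(\bigwedge^{s}E)_{norm}=\bigwedge^{s}E(-k)$ with $k=\lceil se/r\rceil$. Since $a\ge se/r$ and $a\in\ZZ$ we have $a\ge\lceil se/r\rceil=k$, and tensoring the inclusion $\OO_X(-a)\hookrightarrow\OO_X(-k)$ (which exists because $a-k\ge 0$) with the locally free sheaf $\bigwedge^{s}E$ gives $0\ne H^0(\bigwedge^{s}E(-a))\subseteq H^0(\bigwedge^{s}E(-k))=H^0((\bigwedge^{s}E)_{norm})$, contradicting the hypothesis for $q=s$; hence $E$ is stable. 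For the semistable statement one runs the identical argument starting from a strictly destabilizing $F$ (so $a/s>e/r$) and reads the resulting inclusion against $(\bigwedge^{s}E)_{norm}(-1)$ in place of $(\bigwedge^{s}E)_{norm}$.

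The step I expect to be the real obstacle is the middle one: checking that torsion-freeness of $E/F$ confines the non-subbundle locus to codimension $\ge 2$, that $\det F$ is a genuine line bundle (which is exactly where smoothness, i.e.\ local factoriality, of $X$ is used), and that the morphism $\det(F|_U)\to\bigwedge^{s}E|_U$ extends over that locus and remains injective --- the familiar ``Hartogs in codimension $\ge 2$'' manoeuvre, but the only place the geometric hypotheses on $X$ genuinely enter. Everything after that is bookkeeping with the normalizing twist, routine once $\pic(X)\cong\ZZ$ has been used to trade line bundles for integers (one should only be mildly careful with the direction of the rounding in the semistable case). The converse implications hold as well but lie deeper: they require building a rank-$q$ subsheaf of $E$ out of a section of $\bigwedge^{q}E$; the proposition, however, needs only the direction sketched above.
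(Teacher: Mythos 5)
Your proof of the first implication (stability) is correct and is the standard argument for Hoppe's criterion; the paper itself gives no proof of this proposition, quoting it from Hoppe \cite{3} (via Jardim--S\'a Earp), so there is nothing in the text to compare against, but your route --- saturate a destabilizing subsheaf $F$, produce $\det F=\OO_X(a)\hookrightarrow\bigwedge^{s}E$ by extending the subbundle inclusion across the codimension-$\geq 2$ locus where $E/F$ fails to be a subbundle, and compare $a$ with the normalizing twist $k=\lceil se/r\rceil$ --- is exactly the intended one, and the ``Hartogs'' step you single out is indeed the only geometric content.

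The semistable half, however, has a genuine gap at precisely the point you dismiss as ``mild carefulness with the direction of the rounding.'' A strictly destabilizing $F$ gives $a>se/r$ with $a\in\ZZ$, hence $a\geq\lfloor se/r\rfloor+1$; this equals $\lceil se/r\rceil+1=k+1$ only when $se/r\in\ZZ$, whereas for $se/r\notin\ZZ$ it gives merely $a\geq\lceil se/r\rceil=k$, so you obtain a nonzero section of $(\bigwedge^{s}E)_{norm}$ but not of $(\bigwedge^{s}E)_{norm}(-1)$, and the contrapositive does not close. This is not a defect of your write-up alone: the implication as printed is false. Take $E=\OO_{\PP^1}\oplus\OO_{\PP^1}(1)$ on $X=\PP^1$; then $E_{norm}=\OO(-1)\oplus\OO$, so $H^0(E_{norm}(-1))=H^0(\OO(-2)\oplus\OO(-1))=0$, yet $\OO(1)\subset E$ has slope $1>\mu(E)=\tfrac12$, so $E$ is not even semistable. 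The correct form of the criterion separates cases according to $c_1((\bigwedge^{q}E)_{norm})$: $E$ is semistable provided that for each $q$ one has $H^0((\bigwedge^{q}E)_{norm})=0$ when $c_1((\bigwedge^{q}E)_{norm})<0$, and $H^0((\bigwedge^{q}E)_{norm}(-1))=0$ when $c_1((\bigwedge^{q}E)_{norm})=0$ (equivalently, when $r$ divides $q\,c_1(E)$); your argument proves exactly this corrected statement. Since the paper only ever invokes the stability half (and, in the body, its polycyclic analogue), nothing downstream is affected, but as a proof of the proposition as stated the second half cannot be completed.
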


\subsection{Hoppe's Criterion over polycyclic varieties.}
Suppose that the picard group Pic$(X) \simeq \ZZ^l$ where $l\geq2$ is an integer then $X$ is a polycyclic variety.
Given a divisor $B$ on $X$ we define $\delta_{\mathscr{L}}(B):= deg_{\mathscr{L}}\OO_{X}(B)$.
Then one has the following stability criterion {\cite{5}, Theorem 3}:

\begin{theorem}[Generalized Hoppe Criterion]
 Let $G\rightarrow X$ be a holomorphic vector bundle of rank $r\geq2$ over a polycyclic variety $X$ equiped with a polarisation 
 $\mathscr{L}$ if
 \[H^0(X,(\wedge^sG)\otimes\OO_X(B))=0\] 
 for all $B\in\pic(X)$ and $s\in\{1,\ldots,r-1\}$ such that
 $\begin{CD}\displaystyle{\delta_{\mathscr{L}}(B)<-s\mu_{\mathscr{L}}(G)}\end{CD}$ then $G$ is stable and if
 $\begin{CD}\displaystyle{\delta_{\mathscr{L}}(B)\leq-s\mu_{\mathscr{L}}(G)}\end{CD}$ then $G$ is semi-stable.\\
 Conversely if then $G$ is (semi-)stable then  \[H^0(X,G\otimes\OO_X(B))=0\]
 for all $B\in\pic(X)$ and all $s\in\{1,\ldots,r-1\}$ such that
 $\delta_{\mathscr{L}}(B)<-s\mu_{\mathscr{L}}(G)$ or $\delta_{\mathscr{L}}(B)\leq-s\mu_{\mathscr{L}}(G)$.
\end{theorem}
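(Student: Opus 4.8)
The plan is to derive the criterion from the elementary description of $\mathscr{L}$-(semi)stability in terms of saturated subsheaves, together with the passage from a rank-$s$ subsheaf of $G$ to a line subsheaf of $\wedge^sG$. Recall that $G$ is $\mathscr{L}$-semistable (resp. stable) exactly when every saturated subsheaf $F\subset G$ of rank $s\in\{1,\ldots,r-1\}$ satisfies $\mu_{\mathscr{L}}(F)\le\mu_{\mathscr{L}}(G)$ (resp. $<$); one may restrict to saturated $F$ because passing to the saturation does not lower the slope, and for such $F$ the quotient $G/F$ is torsion-free and $\det F$ is a genuine line bundle, say $\det F=\OO_X(D_F)$, with $\delta_{\mathscr{L}}(D_F)=\deg_{\mathscr{L}}(\det F)=s\,\mu_{\mathscr{L}}(F)$.

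I would prove the sufficiency part by contraposition. If $G$ is not semistable, choose a saturated $F\subset G$ of rank $s$ with $\mu_{\mathscr{L}}(F)>\mu_{\mathscr{L}}(G)$. The inclusion $F\hookrightarrow G$ induces $\wedge^sF\to\wedge^sG$, which over the open set $U$ on which $G/F$ is locally free (its complement has codimension $\ge 2$, as $F$ is saturated) is the inclusion of the subbundle $\det(F)|_U$ into $\wedge^s(G)|_U$; since $\det F$ is invertible and $\wedge^sG$ locally free, this map extends over $X\setminus U$ to a nonzero, hence injective, morphism $\OO_X(D_F)\hookrightarrow\wedge^sG$. Tensoring by $\OO_X(-D_F)$ produces a nonzero section of $(\wedge^sG)\otimes\OO_X(B)$ with $B:=-D_F$, and $\delta_{\mathscr{L}}(B)=-s\,\mu_{\mathscr{L}}(F)<-s\,\mu_{\mathscr{L}}(G)$, contradicting the vanishing hypothesis; so $G$ is semistable. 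If instead the vanishing is assumed on the larger range $\delta_{\mathscr{L}}(B)\le-s\,\mu_{\mathscr{L}}(G)$, the same construction applied to a saturated $F$ with $\mu_{\mathscr{L}}(F)=\mu_{\mathscr{L}}(G)$ shows no such $F$ exists, so $G$ is stable.

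For the converse I would reverse the argument. Suppose $G$ is semistable (resp. stable) and $H^0(X,G\otimes\OO_X(B))\ne 0$; a nonzero section is a nonzero map $\OO_X(-B)\to G$, which is automatically injective since $\OO_X(-B)$ is a line bundle, and its saturation is a line subbundle $\OO_X(D)\subset G$ with $\delta_{\mathscr{L}}(D)\ge-\delta_{\mathscr{L}}(B)$. As $\OO_X(D)$ has rank $1<r$, (semi)stability forces $\delta_{\mathscr{L}}(D)\le\mu_{\mathscr{L}}(G)$ (resp. $<$), whence $\delta_{\mathscr{L}}(B)\ge-\mu_{\mathscr{L}}(G)$ (resp. $>$); contraposing gives the claimed vanishing for $s=1$. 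The analogous statement with $\wedge^sG$ in place of $G$ follows by the same computation once one knows $\wedge^sG$ is $\mathscr{L}$-semistable of slope $s\,\mu_{\mathscr{L}}(G)$, which holds over $\CC$ because $\wedge^sG$ is a direct summand of $G^{\otimes s}$ and tensor products of semistable sheaves are semistable in characteristic zero.

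The step I expect to be the crux is the one in the sufficiency argument where a saturated rank-$s$ subsheaf $F$ is promoted to an honest injection $\det F\hookrightarrow\wedge^sG$ of line bundles with $\delta_{\mathscr{L}}$ of $\det F$ equal to exactly $s\,\mu_{\mathscr{L}}(F)$: this is where one must invoke that saturation does not decrease the slope, that the non-locally-free locus of the torsion-free quotient $G/F$ has codimension at least two, that a reflexive rank-one sheaf on a smooth variety is invertible, and that a morphism into a locally free sheaf extends across a closed set of codimension $\ge 2$. The only other non-formal input, needed solely for the $\wedge^sG$ form of the converse, is the semistability of tensor powers over $\CC$; it is not required if one wants the converse only for $G$ itself, which suffices for the applications below.
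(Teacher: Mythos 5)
The paper does not actually prove this statement: it is imported verbatim (as Theorem 2.4) from reference [5] of the bibliography (Jardim--Menet--Prata--S\'a Earp), so there is no in-paper argument to compare yours against. Your reconstruction is correct and is essentially the standard proof, the same one given in that source: non-(semi)stability is witnessed by a saturated rank-$s$ subsheaf $F$, which yields an injection $\det F\hookrightarrow\wedge^sG$ (extending across the codimension-two locus where $G/F$ fails to be locally free) and hence a nonzero section of $(\wedge^sG)\otimes\OO_X(-D_F)$ with $\delta_{\mathscr{L}}(-D_F)=-s\mu_{\mathscr{L}}(F)$; the converse reverses this for $s=1$ by saturating the line subsheaf coming from a section. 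All the supporting facts you flag (saturation does not decrease slope, rank-one reflexive sheaves on a smooth variety are invertible, Hartogs extension into a locally free sheaf, and semistability of $\wedge^sG$ in characteristic zero for the $\wedge^s$ form of the converse) are standard and correctly deployed. One point worth making explicit: your argument shows that vanishing on the range $\delta_{\mathscr{L}}(B)\leq -s\mu_{\mathscr{L}}(G)$ yields stability while vanishing on the strictly smaller range $\delta_{\mathscr{L}}(B)< -s\mu_{\mathscr{L}}(G)$ yields only semistability, which agrees with the original statement in [5]; the transcription in the paper interchanges these two inequalities (assigning the weaker hypothesis to the stronger conclusion), so your version is the one that should be used, and fortunately the application in Theorem 4.3 establishes the vanishing for the full non-strict range, so nothing downstream is affected.
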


\subsection{Hoppe's Criterion over $X=\PP^n\times\PP^n\times\PP^m\times\PP^m$}
Suppose the ambient space is $X=\PP^n\times\PP^n\times\PP^m\times\PP^m$ then $\pic(X) \simeq \ZZ^4$.
We denote by $\langle a, b,c,d\rangle$ the generators of $\pic(X)$.
Denote by $\OO_X(a,b,c,d):= {p_1}^*\OO_{\PP^n}(a)\otimes {p_2}^*\OO_{\PP^m}(b)\otimes {p_3}^*\OO_{\PP^m}(c)\otimes {p_4}^*\OO_{\PP^m}(d)$, where $p_1$ and $p_2$ are natural projections  from $X$ 
to $\PP^n$ and $p_3$ and $p_4$ are natural projections  from $X$ to $\PP^m$.
For any line bundle $\mathscr{L} = \OO_X(a,b,c,d)$ on $X$ and a vector bundle $E$, we will write $E(a,b,c,d) = E\otimes\OO_X(a,b,c,d)$ 
and $(a,b,c,d):= a[h\times\PP^n]+b[\PP^n\times t]+c[h\times\PP^m]+d[\PP^m\times t]$ to represent its corresponding divisor.
The normalization of $E$ on $X$ with respect to $\mathscr{L}$ is defined as follows:
Set $d=\deg_{\mathscr{L}}(\OO_X(1,0,0,0))$, since $\deg_{\mathscr{L}}(E(-k_E,0,0,0))=\deg_{\mathscr{L}}(E)-4k\cdot \rk(E)$ 
there's a unique integer $k_E:=\lceil\mu_\mathscr{L}(E)/d\rceil$ such that $1 - d.\rk(E)\leq \deg_\mathscr{L}(E(-k_E,0,0,0))\leq0$. 
The twisted bundle $E_{{\mathscr{L}}-norm}:= E(-k_E,0,0,0)$ is called the $\mathscr{L}$-normalization of $E$.
Finally we define the linear functional $\delta_{\mathscr{L}}$ on  $\mathbb{Z}^4$ as $\delta_{\mathscr{L}}(p_1,p_2,p_3,p_4):= \deg_{\mathscr{L}}\OO_{X}(p_1,p_2,p_3,p_4)$.

\begin{proposition}[\cite{6}, Proposition 6]
Let $X$ be a polycyclic variety with Picard number $2$, let $\mathscr{L}$ be an ample line bundle and
let E be a rank $r>1 $ holomorphic vector bundle over $X$.
If $H^0((\bigwedge^q E)_{{\mathscr{L}}-norm}(p_1,p_2)) = 0$ for $1\leq q \leq r-1$ and every $(p_1,p_2)\in \mathbb{Z}^2$ such that $\delta_{\mathscr{L}}\leq0$
then E is $\mathscr{L}$-stable.
\end{proposition}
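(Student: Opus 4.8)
\noindent\emph{Sketch of a proof.} The plan is to reduce this to the Generalized Hoppe Criterion (Theorem~2.5, applied with $G=E$). Both statements assert the vanishing of $H^{0}$ of the exterior powers $\bigwedge^{q}E$ after twisting by line bundles; the only difference is the range of twists. Theorem~2.5 requires $H^{0}\bigl(X,(\bigwedge^{s}E)\otimes\OO_{X}(B)\bigr)=0$ for every $B\in\pic(X)$ with $\delta_{\mathscr{L}}(B)<-s\mu_{\mathscr{L}}(E)$, whereas the hypothesis here controls $H^{0}\bigl((\bigwedge^{q}E)_{{\mathscr{L}}-norm}(p_{1},p_{2})\bigr)$ for $(p_{1},p_{2})$ in the half-space $\delta_{\mathscr{L}}(p_{1},p_{2})\le 0$. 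So the core of the argument will be a dictionary between these two: rewrite each twist demanded by Theorem~2.5 as the $\mathscr{L}$-normalization of $\bigwedge^{s}E$ twisted by a point of that half-space. For this I would first record the numerics of exterior powers: from $c_{1}(\bigwedge^{q}E)=\binom{r-1}{q-1}c_{1}(E)$ and $\rk(\bigwedge^{q}E)=\binom{r}{q}$ one gets $\mu_{\mathscr{L}}(\bigwedge^{q}E)=q\,\mu_{\mathscr{L}}(E)$, hence $k_{\bigwedge^{q}E}=\lceil q\,\mu_{\mathscr{L}}(E)/d\rceil$ with $d=\delta_{\mathscr{L}}(1,0)>0$, so that $(\bigwedge^{q}E)_{{\mathscr{L}}-norm}(p_{1},p_{2})=(\bigwedge^{q}E)\bigl(p_{1}-k_{\bigwedge^{q}E},\,p_{2}\bigr)$.

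Next I would fix $s\in\{1,\dots,r-1\}$ and $B=(b_{1},b_{2})\in\pic(X)$ with $\delta_{\mathscr{L}}(B)<-s\mu_{\mathscr{L}}(E)$, and set $(p_{1},p_{2}):=(b_{1}+k_{\bigwedge^{s}E},\,b_{2})$, so that $(\bigwedge^{s}E)\otimes\OO_{X}(B)=(\bigwedge^{s}E)_{{\mathscr{L}}-norm}(p_{1},p_{2})$. By linearity of $\delta_{\mathscr{L}}$ and the identity above, $\delta_{\mathscr{L}}(p_{1},p_{2})=\delta_{\mathscr{L}}(B)+k_{\bigwedge^{s}E}\,d$, and the key point is to check $\delta_{\mathscr{L}}(B)+k_{\bigwedge^{s}E}\,d\le 0$: here one upgrades the real inequality $\delta_{\mathscr{L}}(B)<-s\mu_{\mathscr{L}}(E)$ to the integral inequality $\delta_{\mathscr{L}}(B)\le -k_{\bigwedge^{s}E}\,d=-\lceil s\mu_{\mathscr{L}}(E)/d\rceil\,d$, using that $\delta_{\mathscr{L}}(B)$ and $k_{\bigwedge^{s}E}\,d$ are integers while $s\mu_{\mathscr{L}}(E)$ need not be. The hypothesis, applied with $q=s$ and this $(p_{1},p_{2})$, then gives $H^{0}\bigl(X,(\bigwedge^{s}E)\otimes\OO_{X}(B)\bigr)=0$; since $s$ and $B$ were arbitrary subject to $\delta_{\mathscr{L}}(B)<-s\mu_{\mathscr{L}}(E)$, Theorem~2.5 yields that $E$ is $\mathscr{L}$-stable.

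An equivalent self-contained route would be to re-run Hoppe's original argument directly: if $E$ were not $\mathscr{L}$-stable, choose a saturated — hence reflexive, hence, $X$ being smooth, locally free in codimension one — destabilizing subsheaf $F\hookrightarrow E$ of rank $s\in\{1,\dots,r-1\}$ with $\mu_{\mathscr{L}}(F)\ge\mu_{\mathscr{L}}(E)$; then $\det F=\OO_{X}(B)$ for some $B\in\pic(X)$, the induced nonzero map $\det F\to\bigwedge^{s}E$ gives $H^{0}\bigl((\bigwedge^{s}E)\otimes\OO_{X}(-B)\bigr)\ne 0$, and rewriting this as $H^{0}\bigl((\bigwedge^{s}E)_{{\mathscr{L}}-norm}(k_{\bigwedge^{s}E}-b_{1},-b_{2})\bigr)\ne 0$ while $\delta_{\mathscr{L}}(B)=s\mu_{\mathscr{L}}(F)\ge s\mu_{\mathscr{L}}(E)$ contradicts the hypothesis once one verifies $\delta_{\mathscr{L}}(k_{\bigwedge^{s}E}-b_{1},-b_{2})\le 0$. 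In either version I expect the main obstacle to be exactly this last bookkeeping step — passing from the slope inequality (a statement about real numbers) to the integral inequality that places the relevant twist inside $\{\delta_{\mathscr{L}}\le 0\}$ — which is precisely why the hypothesis is phrased with the non-strict $\delta_{\mathscr{L}}\le 0$, and why one must track carefully which quantities are integers and whether inequalities are strict. The remaining ingredients (existence of a maximal destabilizing subsheaf, that saturation preserves the rank of the image of $\bigwedge^{s}F\to\bigwedge^{s}E$, and the identity $\mu_{\mathscr{L}}(\bigwedge^{s}E)=s\mu_{\mathscr{L}}(E)$) are standard.
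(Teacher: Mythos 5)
The paper offers no proof of this proposition at all --- it is imported verbatim from~[6] --- so there is no argument of the author's to set yours against; I can only judge your sketch on its own terms. Your overall strategy is the right one: either translate the hypothesis into that of the Generalized Hoppe Criterion (Theorem~2.4), or re-run Hoppe's destabilizing-subsheaf argument directly, and the standard ingredients you invoke ($\mu_{\mathscr{L}}(\bigwedge^{q}E)=q\,\mu_{\mathscr{L}}(E)$, the nonzero map $\det F\to\bigwedge^{s}E$, saturation) are all fine.

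The genuine gap is exactly at the step you yourself flag, and it is not mere bookkeeping: the ``upgrade'' from $\delta_{\mathscr{L}}(B)<-s\mu_{\mathscr{L}}(E)$ to $\delta_{\mathscr{L}}(B)\le -k_{\bigwedge^{s}E}\,d$ does not follow from integrality. Integrality of $\delta_{\mathscr{L}}(B)$ only yields $\delta_{\mathscr{L}}(B)\le-\lceil s\mu_{\mathscr{L}}(E)\rceil$, whereas you need $\delta_{\mathscr{L}}(B)\le-\lceil s\mu_{\mathscr{L}}(E)/d\rceil\,d$, and these targets can differ by up to $d-1$. The obstruction is that $\delta_{\mathscr{L}}$ maps $\pic(X)$ onto the subgroup of $\ZZ$ generated by $d=\delta_{\mathscr{L}}(1,0)$ and $\delta_{\mathscr{L}}(0,1)$, which is strictly larger than $d\ZZ$ unless $d$ divides $\delta_{\mathscr{L}}(0,1)$. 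Indeed the implication as transcribed here fails outright: on $\PP^{2}\times\PP^{1}$ with $\mathscr{L}=\OO(1,1)$ one has $\delta_{\mathscr{L}}(p_{1},p_{2})=2p_{1}+p_{2}$, so $d=2$; for $E=\OO(0,1)\oplus\OO$ we get $\mu_{\mathscr{L}}(E)=1/2$, $k_{E}=1$, $E_{\mathscr{L}-norm}=\OO(-1,1)\oplus\OO(-1,0)$, and $H^{0}(E_{\mathscr{L}-norm}(p_{1},p_{2}))\ne0$ forces $p_{1}\ge1$, $p_{2}\ge-1$, hence $\delta_{\mathscr{L}}(p_{1},p_{2})\ge1>0$. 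So the hypothesis of the proposition holds vacuously, yet $E$ is destabilized by $\OO(0,1)$. No amount of care with your inequalities can rescue the literal statement; what your argument does prove, essentially verbatim, is the proposition under the additional assumption that $\delta_{\mathscr{L}}$ takes values in $d\ZZ$ (true for the balanced polarization on $\PP^{n}\times\PP^{n}$, where $\delta_{\mathscr{L}}(1,0)=\delta_{\mathscr{L}}(0,1)$ by symmetry, but not, e.g., on $\PP^{n}\times\PP^{n}\times\PP^{m}\times\PP^{m}$ whenever $n\nmid m$), or with the hypothesis enlarged from $\delta_{\mathscr{L}}\le0$ to $\delta_{\mathscr{L}}<d$. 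You should say explicitly which repair you are making; as written, the clause ``using that $\delta_{\mathscr{L}}(B)$ and $k_{\bigwedge^{s}E}d$ are integers'' conceals a real failure rather than closing it.
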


\begin{definition}
A vector bundle $E$ is said to be
\begin{enumerate}
 \item decomposable if it is isomorphic to a direct sum $E_1\oplus E_2$ of two non-zero
vector bundles, otherwise $E$ is indecomposable.
\item simple if its only endomorphisms are the homotheties i.e. Hom$(E,E)=k$ which is equivalent
to $h^0(X,E\otimes E^*)=1$.
\end{enumerate}
\end{definition}

\begin{proposition}
Let $0\rightarrow E \rightarrow F \rightarrow G\rightarrow0$ be an exact sequence of vector bundles. \\
Then we have the following exact sequences involving exterior and symmetric powers:\\
\begin{enumerate}
 \item $0\lra\bigwedge^q E \lra\bigwedge^q F \lra\bigwedge^{q-1} F\otimes G\lra\cdots \lra F\otimes S^{q-1}G \lra S^{q}G\lra0$\\
 \item $0\lra S^{q}E \lra S^{q-1}E\otimes F \lra\cdots \lra E\otimes\bigwedge^{q-1}F\lra\bigwedge^q F \lra\bigwedge^q G\lra 0$
\end{enumerate}
\end{proposition}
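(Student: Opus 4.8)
The plan is to deduce both exact sequences from two inputs: that a short exact sequence of vector bundles splits locally, and that the Koszul complex of a single vector bundle (together with its symmetric transpose) is exact in every positive degree. First I would pin down the maps. In (1) the leftmost arrow is $\bigwedge^q$ of the inclusion $E\hookrightarrow F$, the rightmost is the multiplication $F\otimes S^{q-1}G\onto S^qG$ composed with $F\onto G$, and the middle arrow $\bigwedge^{q-i}F\otimes S^iG\lra\bigwedge^{q-i-1}F\otimes S^{i+1}G$ is the canonical comultiplication $\bigwedge^{q-i}F\lra\bigwedge^{q-i-1}F\otimes F$ followed by $F\onto G$ on the last factor and then by the multiplication $G\otimes S^iG\lra S^{i+1}G$; in (2) the arrows are defined dually, using the comultiplication $S^kE\lra S^{k-1}E\otimes E$, the inclusion $E\hookrightarrow F$, and $\bigwedge^q$ of $F\onto G$. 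A direct check shows consecutive arrows compose to zero, so (1) and (2) are honest complexes. Since exactness of a complex of coherent sheaves is a local property, since $F\onto G$ splits over any small enough open set (as $G$ is locally free), and since every arrow above is functorial in the short exact sequence, I may assume $F=E\oplus G$ with the tautological inclusion and projection.

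In this split situation I would feed in the Cauchy decompositions $\bigwedge^q(E\oplus G)\cong\bigoplus_{i+j=q}\bigwedge^iE\otimes\bigwedge^jG$ and $S^q(E\oplus G)\cong\bigoplus_{i+j=q}S^iE\otimes S^jG$ and trace the differentials through them. Complex (1) then becomes the direct sum, over the ``$E$-weight'' $a\in\{0,\dots,q\}$, of the following strands: for $a=q$ the strand is $0\lra\bigwedge^qE\xrightarrow{\ \sim\ }\bigwedge^qE\lra0$, because $\bigwedge^qE\hookrightarrow\bigwedge^qF$ identifies $\bigwedge^qE$ with the summand $\bigwedge^qE\otimes\bigwedge^0G$ and the next arrow annihilates it; and for $a<q$ the strand is $\bigwedge^aE$ tensored with $0\lra\bigwedge^{q-a}G\lra\bigwedge^{q-a-1}G\otimes S^1G\lra\cdots\lra S^{q-a}G\lra0$, which is the degree-$(q-a)$ graded piece of the Koszul complex $\bigwedge^\bullet G\otimes S^\bullet G$ and is therefore exact because $q-a\geq1$. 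Sequence (2) is handled identically after grouping by the ``$G$-weight'' $b$: the top strand is $0\lra\bigwedge^qG\xrightarrow{\ \sim\ }\bigwedge^qG\lra0$, and for $b<q$ one gets $\bigwedge^bG$ tensored with $0\lra S^{q-b}E\lra S^{q-b-1}E\otimes\bigwedge^1E\lra\cdots\lra\bigwedge^{q-b}E\lra0$, the degree-$(q-b)$ piece of the symmetric transpose of the Koszul complex of $E$, which is exact in positive degree over a field of characteristic zero --- and the base field here is $\CC$.

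The one genuinely computational point is the bookkeeping just described: one must check that, under the Cauchy isomorphisms, the differentials of (1) and of (2) are block-diagonal with exactly the stated Koszul strands as diagonal blocks, and in particular that the two ``extreme'' maps $\bigwedge^qE\to\bigwedge^qF$ and $\bigwedge^qF\to\bigwedge^qG$ involve only a single weight summand. The conceptual content is carried entirely by the exactness of the Koszul complex of a bundle and of its symmetric transpose, both classical; the characteristic-zero hypothesis enters only through the transpose used for (2). As a sanity check, for $q=1$ both (1) and (2) specialise to the given sequence $0\to E\to F\to G\to0$.
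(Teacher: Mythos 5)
The paper states this proposition purely as a background fact in its preliminaries and supplies no proof of its own, so there is nothing to match your argument against; the only question is whether your proof stands on its own, and it does. The reduction to the split case is legitimate: every differential you define is natural in the short exact sequence, a short exact sequence of vector bundles splits on a suitable open cover, and exactness of a complex of coherent sheaves is a local condition. The Cauchy decompositions then block-diagonalise both complexes, and the diagonal blocks are precisely the positive-degree graded strands of the Koszul complex $\bigwedge^{\bullet}G\otimes S^{\bullet}G$ for sequence (1), respectively of its symmetric transpose on $E$ for sequence (2), together with one isomorphism strand accounting for the extreme term $\bigwedge^{q}E$ (resp.\ $\bigwedge^{q}G$). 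Two points are worth making explicit if you write this up. First, the bookkeeping you defer --- that the differentials respect the weight grading and that consecutive arrows compose to zero --- is genuinely where the content of the definitions lives, and should be carried out at least for the two extreme maps, since that is what identifies the kernel of $\bigwedge^{q}F\to\bigwedge^{q-1}F\otimes G$ with exactly the summand $\bigwedge^{q}E\otimes\bigwedge^{0}G$. Second, your characteristic caveat is placed correctly: the strand $0\to\bigwedge^{n}G\to\cdots\to S^{n}G\to0$ used in (1) is the degree-$n$ piece of the Koszul resolution of the ground ring over $S^{\bullet}G$ and is exact over any base, whereas the strand $0\to S^{n}E\to\cdots\to\bigwedge^{n}E\to0$ used in (2) requires characteristic zero (or divided powers in place of symmetric powers); since the paper implicitly works over $\CC$, this costs nothing here.
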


\begin{theorem}[K\"{u}nneth formula]
 Let $X$ and $Y$ be projective varieties over a field $k$. 
 Let $\mathscr{F}$ and $\mathscr{G}$ be coherent sheaves on $X$ and $Y$ respectively.
 Let $\mathscr{F}\boxtimes\mathscr{G}$ denote $p_1^*(\mathscr{F})\otimes p_2^*(\mathscr{G})$\\
 then $\displaystyle{H^m(X\times Y,\mathscr{F}\boxtimes\mathscr{G}) \cong \bigoplus_{p+q=m} H^p(X,\mathscr{F})\otimes H^q(Y,\mathscr{G})}$.
\end{theorem}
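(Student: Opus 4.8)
The plan is to reduce the statement to the purely algebraic K\"unneth formula for complexes of $k$--vector spaces, with \v{C}ech cohomology serving as the bridge. Since $X$ and $Y$ are projective over $k$, each is separated and noetherian, and so is the product $X\times Y$ (projective via the Segre embedding). In a separated scheme the intersection of two affine opens is affine, so every finite intersection occurring in an affine open cover is affine, and affine schemes have vanishing higher cohomology for quasi--coherent sheaves. Hence, by Leray's theorem on acyclic covers, the \v{C}ech cohomology of a coherent sheaf with respect to an affine open cover computes its sheaf cohomology, simultaneously on $X$, on $Y$ and on $X\times Y$.

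First I would fix finite affine open covers $\mathfrak{U}=\{U_0,\dots,U_r\}$ of $X$ and $\mathfrak{V}=\{V_0,\dots,V_s\}$ of $Y$. Because the fibre product over $k$ of two affine schemes is affine, $\mathfrak{W}:=\{U_i\times V_j\}$ is an affine open cover of $X\times Y$. On a basic open set there is a canonical identification $(\mathscr{F}\boxtimes\mathscr{G})(U_i\times V_j)\cong\mathscr{F}(U_i)\otimes_k\mathscr{G}(V_j)$: writing $\mathscr{F}|_{U_i}$ and $\mathscr{G}|_{V_j}$ as the sheaves associated to modules $M_i$ over $A_i:=\OO_X(U_i)$ and $N_j$ over $B_j:=\OO_Y(V_j)$, the sheaf $p_1^*\mathscr{F}\otimes p_2^*\mathscr{G}$ on $U_i\times V_j=\mathrm{Spec}(A_i\otimes_k B_j)$ is associated to $(M_i\otimes_k B_j)\otimes_{A_i\otimes_k B_j}(A_i\otimes_k N_j)\cong M_i\otimes_k N_j$, and the same holds over every finite intersection $U_I\times V_J$. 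Assembling these identifications over multi--indices and comparing differentials, one gets a quasi--isomorphism between the \v{C}ech complex $\check{C}^{\bullet}(\mathfrak{W},\mathscr{F}\boxtimes\mathscr{G})$ and the total complex of the double complex $\check{C}^{\bullet}(\mathfrak{U},\mathscr{F})\otimes_k\check{C}^{\bullet}(\mathfrak{V},\mathscr{G})$ --- this is the Eilenberg--Zilber comparison, the only delicate point being the Koszul sign bookkeeping.

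It then remains to invoke the algebraic K\"unneth theorem over the field $k$: for any two cochain complexes $C^{\bullet}$ and $D^{\bullet}$ of $k$--vector spaces one has $H^m\!\big(\mathrm{Tot}(C^{\bullet}\otimes_k D^{\bullet})\big)\cong\bigoplus_{p+q=m}H^p(C^{\bullet})\otimes_k H^q(D^{\bullet})$, with no $\mathrm{Tor}$ correction terms since every $k$--module is flat; indeed, over a field each complex is even (non--canonically) isomorphic to its cohomology, which makes this immediate. Applying this to the \v{C}ech complexes above and combining with the identifications of \v{C}ech and sheaf cohomology on $X$, $Y$ and $X\times Y$ yields $H^m(X\times Y,\mathscr{F}\boxtimes\mathscr{G})\cong\bigoplus_{p+q=m}H^p(X,\mathscr{F})\otimes_k H^q(Y,\mathscr{G})$, as claimed. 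There is no single deep obstacle here: the real work is the bookkeeping of the previous paragraph --- verifying acyclicity of the chosen covers (which is where separatedness and coherence genuinely enter) and matching signs in the comparison of complexes. A more conceptual but less elementary alternative would be to run the Leray spectral sequence for $p_1\colon X\times Y\to X$, identify $R^q p_{1*}(\mathscr{F}\boxtimes\mathscr{G})\cong\mathscr{F}\otimes_k H^q(Y,\mathscr{G})$ via the projection formula and flat base change so that $E_2^{p,q}=H^p(X,\mathscr{F})\otimes_k H^q(Y,\mathscr{G})$, and then establish that $R p_{1*}(\mathscr{F}\boxtimes\mathscr{G})$ is formal, equivalently that the spectral sequence degenerates; for that route the degeneration is the point one has to argue.
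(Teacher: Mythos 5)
Your argument is correct: it is the standard proof of the K\"unneth formula (affine covers whose pairwise intersections remain affine by separatedness, Leray's theorem identifying \v{C}ech with sheaf cohomology, the identification $(\mathscr{F}\boxtimes\mathscr{G})(U_I\times V_J)\cong\mathscr{F}(U_I)\otimes_k\mathscr{G}(V_J)$, the Eilenberg--Zilber comparison of complexes, and the algebraic K\"unneth theorem with no Tor terms over a field). The paper states this result in its preliminaries as a classical fact and gives no proof of its own, so there is no alternative argument to compare with; your write-up correctly supplies the missing justification.
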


\noindent Since for our case we deal $X = \PP^n\times\PP^n\times\PP^m\times\PP^m$ then \\
$\displaystyle{H^t(\OO_X (i,j,k,l)) \cong \bigoplus_{p+q+r+s=t} H^p(\PP^n,\OO_{\PP^n}(i))\otimes H^q(\PP^n,\OO_{\PP^n}(j))\otimes H^r(\PP^m,\OO_{\PP^m}(k)))\otimes H^s(\PP^m,\OO_{\PP^m}(l))}$ 
where $p,q,r,s,t,i,j,k$ and $l$ are integers.

\begin{theorem}[\cite{9}, Theorem 4.1]
 Let $n\geq1$ be an integer  and $d$ be an integer. We denote by $S_d$ the space of homogeneous polynomials of degree in 
 $n+1$ (conventionally if $d<0$ then $S_d=0$):
 \begin{enumerate}
  \item We have $H^0(\PP^n,\OO_{\PP^n}(d))=S_d$ for all $d$.
  \item We have $H^i(\PP^n,\OO_{\PP^n}(d))=0$ for $1<i<n$ and for all $d$.
  \item The space $H^n(\PP^n,\OO_{\PP^n}(d))\cong H^0(\PP^n,\OO_{\PP^n}(-d-n-1))$.
 \end{enumerate}
\end{theorem}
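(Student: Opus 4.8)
The plan is to establish all three statements by the classical \v{C}ech computation of Serre, supplemented by an induction on $n$ for the vanishing of the intermediate cohomology. Write $S=k[x_0,\ldots,x_n]$ for the homogeneous coordinate ring of $\PP^n$ and let $\mathcal U=\{U_i\}_{i=0}^{n}$ be the standard affine open cover with $U_i=D_+(x_i)$. Since $\PP^n$ is separated and every finite intersection $U_{i_0\cdots i_p}$ is affine (it is the spectrum of the degree-zero part of $S_{x_{i_0}\cdots x_{i_p}}$), \v{C}ech cohomology with respect to $\mathcal U$ computes sheaf cohomology. The first step is to note that, after summing over all twists $d\in\ZZ$, the \v{C}ech complex of $\bigoplus_d\OO_{\PP^n}(d)$ becomes the complex of graded $S$-modules
\[
\bigoplus_{i}S_{x_i}\ \lra\ \bigoplus_{i<j}S_{x_ix_j}\ \lra\ \cdots\ \lra\ S_{x_0\cdots x_n},
\]
so the entire problem is reduced, degree by degree, to a computation with localizations of a polynomial ring.

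For (1), the kernel of the first differential is $\bigcap_i S_{x_i}$, the intersection taken inside $S_{x_0\cdots x_n}$; unique factorization gives $\bigcap_i S_{x_i}=S$, and taking degree-$d$ parts yields $H^0(\PP^n,\OO_{\PP^n}(d))=S_d$. For (3), I would compute the cokernel of the last differential $\bigoplus_i S_{x_0\cdots\widehat{x_i}\cdots x_n}\lra S_{x_0\cdots x_n}$: its image is the span of the Laurent monomials $x_0^{a_0}\cdots x_n^{a_n}$ having at least one nonnegative exponent, so the cokernel has a $k$-basis given by the monomials with all $a_i\le-1$. Substituting $a_i=-1-b_i$ with $b_i\ge0$ identifies the degree-$d$ piece of this cokernel, monomial by monomial, with the span of the degree-$(-d-n-1)$ monomials, that is, with $S_{-d-n-1}=H^0(\PP^n,\OO_{\PP^n}(-d-n-1))$; this is (3). (Equivalently, one invokes Serre duality together with $\omega_{\PP^n}\cong\OO_{\PP^n}(-n-1)$ and the fact that a finite-dimensional vector space is isomorphic to its dual.)

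For (2) I would induct on $n$; the case $n=1$ is vacuous, there being no integer $i$ with $0<i<1$. For the inductive step, choose a hyperplane $H\cong\PP^{n-1}$ and tensor $0\to\OO_{\PP^n}(-1)\to\OO_{\PP^n}\to\OO_H\to0$ by $\OO_{\PP^n}(d)$ to obtain
\[
0\ \lra\ \OO_{\PP^n}(d-1)\ \lra\ \OO_{\PP^n}(d)\ \lra\ \OO_H(d)\ \lra\ 0 .
\]
In the long exact cohomology sequence, $H^{i-1}(H,\OO_H(d))=H^{i-1}(\PP^{n-1},\OO_{\PP^{n-1}}(d))$ vanishes for $2\le i\le n-1$ by the inductive hypothesis, while for $i=1$ the restriction $H^0(\OO_{\PP^n}(d))\to H^0(\OO_H(d))$ is surjective by (1); in both cases one gets an injection $H^i(\OO_{\PP^n}(d-1))\hookrightarrow H^i(\OO_{\PP^n}(d))$ for all $d$ and all $1\le i\le n-1$. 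Iterating this and using $H^i(\OO_{\PP^n}(d))=0$ for $d\gg0$ (Serre vanishing, or again from the \v{C}ech complex) forces these groups to vanish for every $d$, which gives (2).

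The one genuinely delicate point is the top-degree computation in (3): one must pin down precisely which Laurent monomials lie in the image of the last \v{C}ech differential and track the index shift $a_i\mapsto-1-b_i$ that produces the exponent $-d-n-1$; everything else is routine bookkeeping with long exact sequences and localizations. Since the statement is quoted verbatim from \cite{9}, the most economical option in the paper is simply to cite that reference, the sketch above being the standard argument behind it.
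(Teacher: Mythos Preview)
Your argument is the standard Serre/\v{C}ech computation and is correct; the only quibble is that the paper's statement reads ``$1<i<n$'' rather than ``$0<i<n$'', but your proof in fact establishes the stronger (and intended) range $1\le i\le n-1$. As you yourself note in your final paragraph, the paper does not supply a proof of this theorem at all: it is quoted as a background result from \cite{9} and simply cited, so there is nothing to compare against beyond observing that your sketch reproduces the classical argument behind that reference.
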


We adopt a lemma by Jardim and Earp [\cite{6}, Lemma 9] for our purpose in this work.
\begin{lemma}
If $p_1+p_2+p_3+p_4>0$ then $h^p(X,\OO_X (-p_1,-p_2,-p_3,-p_4)^{\oplus k}) = 0$ where $X = \PP^n\times\PP^n\times\PP^m\times\PP^m$ and for $0\leq p< \dim(X) -1$, for $k$ a non negative integer.
\end{lemma}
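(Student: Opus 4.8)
The plan is to reduce the vanishing on $X=\PP^n\times\PP^n\times\PP^m\times\PP^m$ to a statement about the four projective-space factors via the Künneth formula, and then to invoke the cohomology of line bundles on $\PP^n$ (Theorem~\ref{...}, i.e. the ``$S_d$'' computation). First I would apply Künneth to write
\[
H^p\bigl(X,\OO_X(-p_1,-p_2,-p_3,-p_4)\bigr)\;\cong\;\bigoplus_{a+b+c+d=p}H^a\bigl(\PP^n,\OO(-p_1)\bigr)\otimes H^b\bigl(\PP^n,\OO(-p_2)\bigr)\otimes H^c\bigl(\PP^m,\OO(-p_3)\bigr)\otimes H^d\bigl(\PP^m,\OO(-p_4)\bigr),
\]
so it suffices to show every summand on the right vanishes when $p_1+p_2+p_3+p_4>0$ and $0\le p<\dim X-1=2n+2m-1$. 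Since taking the $k$-fold direct sum does not affect vanishing, I would drop the $\oplus k$ from the start.

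The key step is a case analysis on the degrees of the four indices $a,b,c,d$ appearing in a given Künneth summand. On $\PP^N$ the cohomology $H^i(\PP^N,\OO(e))$ is nonzero only for $i=0$ (when $e\ge 0$) or $i=N$ (when $e\le -N-1$); in particular it vanishes identically in the ``middle'' range $1\le i\le N-1$ and also at $i=N$ whenever $e\ge -N$. So for a summand to be nonzero, each factor must contribute either its $H^0$ or its top cohomology. Because at least one of $-p_1,-p_2,-p_3,-p_4$ is strictly negative (as $p_1+p_2+p_3+p_4>0$), at least one factor, say the $\PP^n$-factor in position~1, has $H^0=0$, hence must contribute $H^n$, forcing $-p_1\le -n-1$, i.e. $p_1\ge n+1$, and pushing the homological degree $p$ up by at least $n$. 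I would then argue that in order to reach a total homological degree that is still $\le 2n+2m-1$, not all four factors can sit in top degree simultaneously — indeed if all four were in top degree then $p=2n+2m$, which is excluded — and more carefully that any nonzero summand forces $p\ge \min(n,m)+\cdots$ beyond the allowed range, or else forces one of the ``$H^0$'' factors to have nonnegative twist while another ``$H^{top}$'' factor drags the total degree to exactly $2n+2m$. Chasing these inequalities (using $p_1+p_2+p_3+p_4>0$ to guarantee the existence of a top-cohomology factor, and the constraint $p\le 2n+2m-1$ to exclude the all-top case) yields a contradiction in every case, so all summands vanish.

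The main obstacle, and the part requiring the most care, is the bookkeeping in the mixed cases where two or three of the factors are in top cohomology and the remaining ones are in degree zero: one must verify that the numerical constraint $p_1+p_2+p_3+p_4>0$ is genuinely incompatible with simultaneously having the top-cohomology twists satisfy $-p_i\le -(\dim)-1$ and the degree-zero twists satisfy $-p_j\ge 0$ while keeping $p=\sum(\text{top dimensions})\le 2n+2m-1$. Here the cited result of Jardim and Earp (\cite{6}, Lemma~9) is the exact analogue for a product of possibly fewer factors, so I would either quote it directly after matching notation, or reprove it by the induction-on-number-of-factors argument sketched above, peeling off one $\PP^n$ or $\PP^m$ at a time with the Künneth formula and the base case $X=\PP^N$, which is immediate from Theorem~\ref{...}. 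Once the all-top case ($p=\dim X$) and the ``some factor has positive-degree part'' cases are separated, each reduces to a one-line inequality, and the lemma follows.
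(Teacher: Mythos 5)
The paper does not actually prove this lemma --- it is simply ``adopted'' from \cite{6}, Lemma~9, with no argument given --- so there is no internal proof to compare against. Your strategy (K\"unneth plus the computation of $H^i(\PP^N,\OO(e))$, followed by a case analysis on which factors sit in degree $0$ and which in top degree) is the natural one and surely the one underlying the cited lemma.

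However, the step you defer with ``chasing these inequalities yields a contradiction in every case'' is exactly where the argument fails, and in fact the statement as written is false, so no bookkeeping can close the gap. The problematic case is the one you gesture at but do not carry out: one factor in top cohomology and the remaining three in $H^0$. Such a summand contributes only $n$ (or $m$) to the total cohomological degree, not $2n+2m$, and it is not excluded by the hypothesis $p_1+p_2+p_3+p_4>0$. Concretely, take $n=m=1$ and $(p_1,p_2,p_3,p_4)=(2,0,0,0)$: then $p_1+p_2+p_3+p_4=2>0$, yet
\[
H^1\bigl(X,\OO_X(-2,0,0,0)\bigr)\;\cong\;H^1\bigl(\PP^1,\OO(-2)\bigr)\otimes H^0(\OO)\otimes H^0(\OO)\otimes H^0(\OO)\;\neq\;0,
\]
with $1<\dim X-1=3$. (If negative $p_i$ are allowed the situation is even worse, e.g.\ $(2,-1,0,0)$.) The statement does become true, and your argument closes in one line, under the stronger hypothesis $0\le p_1,p_2\le n$ and $0\le p_3,p_4\le m$ with positive sum: then any factor with $p_i>0$ has vanishing cohomology in \emph{every} degree, so each K\"unneth summand contains a zero tensor factor. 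Without some such restriction the best one can say for $p_i\ge 0$ with positive sum is vanishing in the range $0\le p<\min(n,m)$, since a nonzero summand must contain at least one top-degree factor. You should flag this: the lemma needs a corrected hypothesis before it can be proved, and the attempt to verify the postponed inequalities would have revealed this.
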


\begin{lemma}[\cite{6}, Lemma 10]
Let $A$ and $B$ be vector bundles canonically pulled back from $A'$ on $\PP^n$ and $B'$ on $\PP^m$ then\\
$\displaystyle{H^q(\bigwedge^s(A\otimes B))=
\sum_{k_1+\cdots+k_s=q}\big\{\bigoplus_{i=1}^{s}(\sum_{j=0}^s\sum_{m=0}^{k_i}H^m(\wedge^j(A))\otimes(H^{k_i-m}(\wedge^{s-j}(B)))) \big\}}$.
\end{lemma}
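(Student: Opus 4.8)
The plan is to reduce the computation to the K\"unneth formula together with the fact that exterior powers commute with pullback. Write the bundle in the statement as $A\oplus B$, with $A=p_{1}^{*}A'$ pulled back from the $\PP^n$-factor and $B=p_{2}^{*}B'$ pulled back from the $\PP^m$-factor, so that $X$ is viewed as a product $\PP^n\times\PP^m$ of the two groups of factors. The first step is the classical decomposition of the exterior power of a direct sum,
\[
\bigwedge\nolimits^{s}(A\oplus B)\;\cong\;\bigoplus_{j=0}^{s}\bigwedge\nolimits^{j}A\otimes\bigwedge\nolimits^{s-j}B,
\]
which, since $\bigwedge^{j}A=p_{1}^{*}\bigwedge^{j}A'$ and $\bigwedge^{s-j}B=p_{2}^{*}\bigwedge^{s-j}B'$, rewrites each summand as an external product $(\bigwedge^{j}A')\boxtimes(\bigwedge^{s-j}B')$. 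If one prefers to see this through the machinery already recalled, one feeds the defining sequences of $A$ and $B$ into the exact sequences for exterior and symmetric powers above and splits them into short exact pieces; the connecting homomorphisms produced this way are exactly what introduce the extra summation indices $k_{1},\dots,k_{s}$ occurring in the statement.

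The second step applies the K\"unneth formula to each external product term:
\[
H^{k}\!\bigl((\bigwedge\nolimits^{j}A')\boxtimes(\bigwedge\nolimits^{s-j}B')\bigr)\;\cong\;\bigoplus_{m+m'=k}H^{m}(\PP^n,\bigwedge\nolimits^{j}A')\otimes H^{m'}(\PP^m,\bigwedge\nolimits^{s-j}B').
\]
Summing over the exterior-power index $j$, over the way the total cohomological degree $q$ is distributed among the pieces of the resolution, and then collecting terms, reproduces the triple sum $\sum_{k_{1}+\cdots+k_{s}=q}\bigoplus_{i}\sum_{j}\sum_{m}$ displayed in the lemma, each innermost factor being a tensor product of a cohomology group on $\PP^n$ with one on $\PP^m$.

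To make the right-hand side fully explicit I would finally note that in the applications $A'$ and $B'$, and hence $\bigwedge^{j}A'$ and $\bigwedge^{s-j}B'$, are direct sums of line bundles, so each group $H^{m}(\PP^n,\OO_{\PP^n}(d))$ is given by the Bott-type theorem recalled above; together with the long-exact-sequence chase this turns every summand of the formula into an explicit vector space, in the form used later in the paper.

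The step I expect to be the real obstacle is the combinatorial bookkeeping of the first two steps: tracking which of the indices $j$ (the exterior split), $m$ (the K\"unneth split in the $\PP^n$ direction) and $k_{1},\dots,k_{s}$ (the decomposition of $q$ coming from iterating the short exact sequences) feeds into which summand, and verifying that no contribution is gained or lost when the associated spectral sequence, equivalently the iterated connecting maps of the exterior and symmetric power sequences, degenerates. There is no geometric subtlety here, since everything rests on the K\"unneth formula and the cohomology of line bundles on projective space; the entire difficulty lies in reorganising the answer into the compact summed form asserted in the statement.
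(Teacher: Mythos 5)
Your proposal takes essentially the same route the paper does: the paper gives no written proof of this lemma beyond listing the three facts it ``depends on'' --- the decomposition of $\bigwedge^s$ into the terms $\bigwedge^{j}A\otimes\bigwedge^{s-j}B$, the K\"unneth formula for the resulting external products, and the behaviour of cohomology under direct sums --- and these are exactly the three steps you carry out. Your reading of $\bigwedge^s(A\otimes B)$ via the direct-sum splitting formula is also precisely how the paper's own third listed fact treats it, so the two arguments coincide.
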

\noindent The lemma above depends on the following facts:

\begin{center}
$\displaystyle{H^q(A_1\oplus\cdots\oplus A_s) = \sum_{k_1+\cdots+k_s=q}\big\{\bigoplus_{i=1}^{s}H^k_i(A_i)\big\}}$.\\
$\displaystyle{H^q(A\otimes B)=\sum_{m=0}^qH^m(A)\otimes H^{q-m}(B)}$.\\
$\displaystyle{\wedge^s(A\otimes B)=\sum_{j=0}^s\wedge^j(A)\otimes\wedge^{s-j}(B)}$.
\end{center}

\subsection{Background on Monads}

\begin{definition}
Let $X$ be a nonsingular projective variety. A {\it{monad}} on $X$ is a complex of vector bundles:
\[ M_\bullet:
\xymatrix{0\ar[r] & M_0 \ar[r]^{\alpha} & M_1 \ar[r]^{\beta} & M_2 \ar[r] & 0}
\]
with $\alpha$ injective and $\beta$ surjective equivalently, $M_\bullet$ is a monad if $\alpha$ and $\beta$ are of maximal rank and $\beta\circ\alpha = 0$.
\end{definition}


\begin{definition}
A monad as defined above has a display diagram of short exact sequences as shown below:\\
\[
\begin{CD}
@.@.0@.0\\
@.@.@VVV@VVV\\
0@>>>{M_0} @>>>\ker{\beta}@>>>E@>>>0\\
@.||@.@VVV@VVV\\
0@>>>{M_0} @>>^{\alpha}>{M_1}@>>>\cok{\alpha}@>>>0\\
@.@.@V^{\beta}VV@VVV\\
@.@.{M_2}@={M_2}\\
@.@.@VVV@VVV\\
@.@.0@.0
\end{CD}
\]
The kernel of the map $\beta$, $\ker\beta$ and the cokernel of $\alpha$, $\cok\alpha$ for the given monad are also vector bundles and the vector bundle
$E = \ker(\beta)/\im (\alpha)$ and is called the cohomology bundle of the monad.
\end{definition}

\begin{definition}\cite{8}
Let $X$ be a nonsingular projective variety, let $\mathscr{L}$ be a very ample line sheaf, and $V,W,U$ be finite dimensional $k$-vector spaces.
A linear monad on $X$ is a complex of sheaves,
\[ 
\xymatrix
{
0\ar[r] & V\otimes {\mathscr{L}}^{-1} \ar[r]^{\alpha} & W\otimes \OO_X \ar[r]^{\beta} & U\otimes \mathscr{L} \ar[r] & 0
}
\]
where $\alpha\in $Hom$(V,W)\otimes H^0 \mathscr{L}$ is injective and $\beta\in $Hom$(W,U)\otimes H^0 \mathscr{L}$ is surjective.
\end{definition}

\begin{definition}\cite{8}
A torsion-free sheaf $E$ on $X$ is said to be a {\it{linear sheaf}}  on $X$ if it can be represented as the
cohomology sheaf of a linear monad i.e.
$E= ker(\beta)/im (\alpha)$, moreover $\rk(E) = w - u - v$, where $w=\dim W$, $v=\dim V$ and $u=\dim U$.
\end{definition}

\section{Main Results}
\noindent The goal of this section is to construct monads over the cartesian product $X=\PP^n\times\PP^n\times\PP^m\times\PP^m$ of projective spaces.
We then proceed  to prove stability and simplicity of the cohomology bundle associated to such monads. 
We start by recalling the existence and classification of linear monads on $\PP^n$ given by
Fl\o{}ystad in \cite{4}.

\begin{lemma}[\cite{2}, Main Theorem] Let $k\geq1$. There exists monads on $\PP^k$ whose maps are matrices of linear forms,
\[
\begin{CD}
0@>>>{\OO_{\PP^{k}}(-1)^{\oplus a}} @>>^{A}>{\OO^{\oplus b}_{\PP^{k}}} @>>^{B}>{\OO_{\PP^{k}}(1)^{\oplus c}} @>>>0\\
\end{CD}
\]
if and only if at least one of the following is fulfilled;\\
$(1)b\geq2c+k-1$ , $b\geq a+c$ and \\
$(2)b\geq a+c+k$
\end{lemma}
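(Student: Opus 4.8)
\noindent The plan is to recast the existence of the monad as a question about one multilinear datum, dispose of the ``large $b$'' regime by a dimension count, and obtain the two boundary inequalities by hand. Write $\PP^k=\PP(T)$ with $\dim_k T=k+1$, so that $H^0(\PP^k,\OO_{\PP^k}(1))=T^\vee$. A complex of the stated shape is then exactly a pair $A\in\operatorname{Hom}(k^a,k^b)\otimes T^\vee$ and $B\in\operatorname{Hom}(k^b,k^c)\otimes T^\vee$ for which $B\circ A$ --- a priori an element of $\operatorname{Hom}(k^a,k^c)\otimes S^2T^\vee$ --- vanishes, and it is a monad precisely when, for every closed point $x\in\PP^k$, the evaluations $A_x\colon k^a\to k^b$ and $B_x\colon k^b\to k^c$ are respectively injective and surjective. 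So everything happens inside the affine variety $Z=\{(A,B):B\circ A=0\}$, on which the two maximal-rank conditions cut out a (possibly empty) Zariski-open set; the task is to show this open set is nonempty exactly in the stated range.

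I would settle sufficiency first. By upper semicontinuity of rank it suffices to exhibit one good pair in each case, and since $(a,b,c)\mapsto(a,b+1,c)$ --- realized by $A\mapsto(A,0)$, $B\mapsto(B,0)$ --- preserves the monad property, it is enough to treat minimal $b$. In the range~(2), $b\ge a+c+k$, take a general $B$: since $b\ge c+k$ its non-surjectivity locus $\{x:\rk B_x<c\}$, of expected codimension $b-c+1>k$, is empty, so $\mathscr K:=\ker B$ is a vector bundle of rank $b-c$; because $\OO_{\PP^k}^{\oplus b}$ is $0$-regular and $\OO_{\PP^k}(1)^{\oplus c}$ is $(-1)$-regular, $\mathscr K$ is $0$-regular, hence $\mathscr K(1)$ is globally generated, so $a$ general sections of $\mathscr K(1)$ --- equivalently a general $A$ with $BA=0$ --- drop independence only on a locus of codimension $(b-c)-a+1$, which is $>k$ exactly when $b\ge a+c+k$; that locus is then empty, $A$ is a subbundle inclusion, and $(A,B)$ is a monad. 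In the range~(1), where a general $A$ fails to have maximal rank, I would instead choose $B$ specially so that $\ker B$ is as favourable as possible: the twisted Euler sequence $0\to\Omega^1_{\PP^k}(1)\to\OO_{\PP^k}^{\oplus(k+1)}\to\OO_{\PP^k}(1)\to0$, together with its direct sums and the analogous explicit resolutions for several $\OO_{\PP^k}(1)$-quotients, supplies surjections with completely understood kernels, and $b\ge2c+k-1$ is precisely the room needed to embed $\OO_{\PP^k}(-1)^{\oplus a}$ into such a kernel as a subbundle, the remaining $b-a-c$ directions being handled using $b\ge a+c$. If one then wants to identify the cohomology sheaf, it can be read off from the hypercohomology spectral sequence of the monad together with the values of $H^\ast(\PP^k,\OO_{\PP^k}(d))$ recalled in \cite{9}.

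For necessity, assume a monad of type $(a,b,c)$ exists on $\PP^k$ and restrict it to a general line $\ell\cong\PP^1$. Then $B|_\ell$ is still surjective, so $\ker(B|_\ell)$ is a rank-$(b-c)$ bundle on $\PP^1$ and, being a subbundle of $\OO_{\PP^1}^{\oplus b}$, is a sum of line bundles $\OO_{\PP^1}(d_i)$ with $d_i\le0$; since $A|_\ell\colon\OO_{\PP^1}(-1)^{\oplus a}\to\ker(B|_\ell)$ is still generically injective, $a\le b-c$, i.e.\ $b\ge a+c$. The sharper bound $b\ge2c+k-1$ in~(1) requires a genuinely $\PP^k$-level argument: it quantifies the obstruction to mapping $\OO_{\PP^k}(-1)^{\oplus a}$ generically injectively into $\ker B$ given the constraints that surjectivity of $B$ imposes on the splitting type of $\ker B$, and it is obtained in \cite{2} by induction on $k$ using $0\to\OO_{\PP^k}(-1)\to\OO_{\PP^k}\to\OO_{\PP^{k-1}}\to0$; the bound $b\ge a+c+k$ in~(2) is, dually, exactly the requirement that the degeneracy locus of $A$ inside $\ker B$, of expected codimension $(b-c)-a+1$, be empty, which it must be in the regime where the generic construction above is the only one available.

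The crux is the sufficiency of range~(1): generic maps do not work there, so one must write down special $A$ and $B$ and verify simultaneously that both have maximal rank at every point of $\PP^k$ and that $B\circ A=0$ --- this explicit multilinear-algebra step is the heart of the proof in \cite{2}. Pinning down the exact constants ``$2c+k-1$'' and ``$a+c+k$'' on the necessity side is the other delicate point, being the codimension bookkeeping for the relevant degeneracy loci on $\PP^k$ carried out with no margin to spare.
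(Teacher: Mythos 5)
The paper does not actually prove this lemma: it is quoted from Fl\o{}ystad \cite{2} as background and never used in the later constructions, so there is no in-paper argument to compare yours against; it must be judged on its own terms. Your proposal correctly disposes of the easy half. The generic construction in range~(2) (a general $B$ is surjective once $b\ge c+k$; the kernel is $0$-regular, hence globally generated, and $a$ general sections of its twist give an everywhere-injective $A$ once $b\ge a+c+k$) is sound, as is the necessity of $b\ge a+c$ by restriction to a general line. But the two steps that carry the actual content of the equivalence --- constructing a monad in range~(1) when $a+c\le b<a+c+k$, and proving that the existence of a monad forces $b\ge 2c+k-1$ or $b\ge a+c+k$ --- are not argued at all: you explicitly defer both to \cite{2} (``the heart of the proof in \cite{2}'', ``obtained in \cite{2} by induction on $k$''). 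Since these are exactly the nontrivial implications, what you have is a plan, not a proof.

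There is also a substantive error in your formalization. You declare the complex to be a monad ``precisely when, for every closed point $x$, $A_x$ is injective and $B_x$ is surjective.'' Under that pointwise definition the theorem is false: take $k=2$, $a=c=1$, $b=3$, which satisfies condition~(1). A pointwise-injective $A$ would make the cohomology locally free of rank $1$ with total Chern class $1/\bigl((1-h)(1+h)\bigr)=1+h^2$ on $\PP^2$, i.e.\ $c_2=1$, which is impossible for a line bundle. Fl\o{}ystad requires only that $A$ be injective as a map of sheaves; in range~(1) outside range~(2) the degeneracy locus of $A$ is in general nonempty (of expected codimension $b-a-c+1\le k$) and the cohomology is a torsion-free sheaf that need not be a bundle. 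Consequently your strategy for range~(1) --- embedding $\OO_{\PP^k}(-1)^{\oplus a}$ \emph{as a subbundle} of $\ker B$ via Euler-type kernels --- cannot succeed in general, and your sketched necessity argument for $b\ge a+c+k$ (``the degeneracy locus of $A$ must be empty'') addresses the wrong statement. The same ambiguity between sheaf-injectivity and fibrewise injectivity is present in the paper's Definitions 2.12--2.13, but the quoted theorem is only true with the weaker notion, so any complete proof has to begin by fixing that convention.
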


\section{Main Results}

\begin{lemma}
Let $n,m$ and $k$ are positive integers, given four matrices $f_1,f_2,f_3$ and $f_4$ of order $k$ by $n+k$, and 
four other matrices $g_1,g_2,g_3$ and $g_4$ of order $n+k$ by $k$ as shown;
\[ f_1 =\left[ \begin{array}{ccccc}
&y_n \cdots y_0 \\
\adots&\adots \\
y_n \cdots y_0 \end{array} \right]_{k\times {(n+k)}}\]

\[ f_2 =\left[ \begin{array}{ccccccc}
&x_n \cdots  x_0\\
\adots&\adots \\
x_n \cdots  x_0 \end{array} \right]_{k\times {(n+k)}}\]

\[ f_3 =\left[ \begin{array}{ccccccc}
&t_m \cdots  t_0\\
\adots&\adots \\
t_m \cdots  t_0 \end{array} \right]_{k\times {(m+k)}}\]

\[ f_4 =\left[ \begin{array}{ccccccc}
&z_m \cdots  z_0\\
\adots&\adots \\
z_m \cdots z_0 \end{array} \right]_{k\times {(m+k)}}\]

\[ g_1 =\left[\begin{array}{cccccc}
x_0\\
\vdots &\ddots
 & x_0\\
x_n &\ddots &\vdots\\
&& x_n
         \end{array} \right]_{{(n+k)}\times k}\] 

\[ g_2 =\left[\begin{array}{cccccc}
y_0\\
\vdots &\ddots
 & y_0\\
y_n &\ddots &\vdots\\
&& y_n
         \end{array} \right]_{{(n+k)}\times k}\]

\[ g_3 =\left[ \begin{array}{cccccc}
z_0\\
\vdots &\ddots
 & z_0\\
z_m &\ddots &\vdots\\
&& z_m
         \end{array} \right]_{{(m+k)}\times k}\] and

\[ g_4 =\left[\begin{array}{cccccc}
t_0\\
\vdots &\ddots
 & t_0\\
t_m &\ddots &\vdots\\
&& t_m
         \end{array} \right]_{{(m+k)}\times k}\]

we define two matrices $f$ and $g$ as follows\\
\[ f =\left[\begin{array}{cccc}
f_1 & -f_2 & f_3 & -f_4
         \end{array} \right]\] and

\[ g =\left[\begin{array}{cc}
g_1 \\ g_2 \\g_3 \\ g_4
         \end{array} \right].\]
Then we have:\\
(i) $f\cdot g = 0$ and
\\
(ii) The matrices $f$ and $g$ have maximal rank
\end{lemma}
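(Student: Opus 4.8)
The two assertions are largely independent: (i) is a bookkeeping identity about the explicit block matrices, while (ii) follows from recognising the blocks as Sylvester-type multiplication matrices. I plan to prove (i) by computing the block product directly. Writing $f=[f_1\mid -f_2\mid f_3\mid -f_4]$ and $g$ for the vertical stack of $g_1,g_2,g_3,g_4$, block multiplication gives $f\cdot g=f_1g_1-f_2g_2+f_3g_3-f_4g_4$, so it suffices to check $f_1g_1=f_2g_2$ and $f_3g_3=f_4g_4$. Reading the entries off the displayed shapes, $(f_1)_{i,l}=y_{n+k-i+1-l}$ and $(g_1)_{l,c}=x_{l-c}$, with the convention that $y_a=x_a=0$ for $a\notin\{0,\dots,n\}$; hence
\[
(f_1g_1)_{i,c}=\sum_{l}y_{n+k-i+1-l}\,x_{l-c}=\sum_{\substack{a+b=n+k+1-i-c\\ 0\le a,b\le n}}y_a x_b .
\]
The identical computation for $f_2g_2$ (which carries $x$'s where $f_1g_1$ carries $y$'s and vice versa) gives $(f_2g_2)_{i,c}=\sum x_a y_b$ over the same index set, and relabelling $a\leftrightarrow b$ shows $(f_1g_1)_{i,c}=(f_2g_2)_{i,c}$. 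Repeating the argument with $(n,y,x)$ replaced by $(m,t,z)$ yields $f_3g_3=f_4g_4$, so the four products cancel in pairs and $f\cdot g=0$.

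For (ii), note that $f$ is a $k\times(2n+2m+4k)$ matrix and $g$ is a $(2n+2m+4k)\times k$ matrix, so "maximal rank" means $\rk f=\rk g=k$; in monad terms this is exactly surjectivity of $f$ and injectivity of $g$ as bundle maps on $X$. I would argue this from the observation that $f_1$ is, after possibly reversing the numbering of the coefficients, the Sylvester matrix of the multiplication map
\[
\CC[s]_{<k}\longrightarrow\CC[s]_{<n+k},\qquad p(s)\longmapsto\Bigl(\sum_{a=0}^{n}y_a s^{a}\Bigr)p(s),
\]
while $g_1$ is the Sylvester matrix of multiplication by $\sum_{b}x_b s^{b}$ on $\CC[s]_{<k}$, and similarly $f_2,g_2$ involve the coefficients $x,y$ and $f_3,g_3,f_4,g_4$ involve $t,z$. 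At every point of $X$ the homogeneous coordinates of each projective factor, such as $(y_0,\dots,y_n)$, are not all zero, so each of these polynomials is nonzero, and multiplication by a nonzero polynomial is injective; hence $f_1$ and $g_1$ both have rank $k$ at every point of $X$. Since $f_1$ is a block of columns of $f$ and $g_1$ is a block of rows of $g$, it follows that $\rk f=\rk g=k$ everywhere, so $f$ is everywhere surjective and $g$ everywhere injective, as claimed.

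The only genuinely delicate step is the index bookkeeping in (i): one must verify that the sums $\sum y_a x_b$ and $\sum x_a y_b$ really range over the same set $\{(a,b):a+b=n+k+1-i-c,\ 0\le a,b\le n\}$ before the relabelling, and one should confirm that the minus signs sit on $f_2$ and $f_4$, since that placement is precisely what makes $f\cdot g$ telescope to zero. Part (ii) is then routine once the multiplication-matrix structure is spotted, the only subtlety being that it is the non-vanishing of projective coordinates, rather than any genericity, that delivers full rank at every single point of $X$.
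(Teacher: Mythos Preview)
Your proposal is correct and follows the same approach as the paper: for (i) you reduce $f\cdot g=0$ to the pairwise identities $f_1g_1=f_2g_2$ and $f_3g_3=f_4g_4$, and for (ii) you argue that the rank can only drop when all coordinates of some projective factor vanish, which never happens on $X$. The paper simply asserts both facts in one line each, whereas you actually supply the index computation for (i) and the Sylvester/polynomial-multiplication interpretation for (ii); your write-up is thus a more detailed version of the same argument rather than a different one. One small slip: $f_1$ is $k\times(n+k)$, so it is $f_1^{T}$ (not $f_1$ itself) that realises the multiplication map $\CC[s]_{<k}\to\CC[s]_{<n+k}$, but this does not affect the rank conclusion.
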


\begin{proof}
(i) Since $f_1\cdot g_1 = f_2\cdot g_2$, $f_3\cdot g_3 = f_4\cdot g_4$ then we have that
\[ f\cdot g =\left[ \begin{array}{cccc} f_1 & -f_2 & f_3 & -f_4 \end{array} \right]
 \left[ \begin{array}{cc} g_1 \\ g_2 \\ g_3 \\ g_4\end{array} \right]\] 
\[ =\left[\begin{array}{cccc} f_1g_1 & -f_2g_2 & f_3g_3 & -f_4g_4 
         \end{array} \right]=0\]
(ii) Notice that the rank of the two matrices drops if and only if all $x_0,...,x_n$, $y_0,...,y_n$, $z_0,...,z_m$
and $t_0,...,t_m$ are zeros. Hence maximal rank.
\end{proof}

\noindent Using the matrices given in the above lemma we are going to construct a monad.

\begin{theorem}
Let $n,m$ and $k$ be positive integers. Then there exists a linear monad on $X = \PP^n\times\PP^n\times\PP^m\times\PP^m$ of the form;
\[
\begin{CD}
0@>>>{\OO_X(-1,-1,-1,-1)^{\oplus k}} @>>^{f}>{\mathscr{G}_n\oplus
\mathscr{G}_m}@>>^{g}>\OO_X(1,1,1,1)^{\oplus k} @>>>0
\end{CD}
\]
where $\mathscr{G}_n:=\OO_X(0,-1,0,0)^{\oplus n+k}\oplus\OO_X(-1,0,0,0)^{\oplus n+k}$ and \\
\hspace*{1cm} $\mathscr{G}_m:=\OO_X(0,0,-1,0)^{\oplus m+k}\oplus\OO_X(0,0,0,-1)^{\oplus m+k}$.

\end{theorem}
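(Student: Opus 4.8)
The plan is to produce the two maps of the monad directly from the scalar matrices supplied by the previous Lemma, and then to read off the three defining properties of a monad --- the complex condition, the fibrewise injectivity of the first map, and the fibrewise surjectivity of the second --- from that Lemma's two parts. Concretely, with the roles of the letters $f$ and $g$ swapped relative to the statement, I would take the first map
\[
\OO_X(-1,-1,-1,-1)^{\oplus k}\lra\mathscr{G}_n\oplus\mathscr{G}_m
\]
to be the morphism whose matrix, relative to the decomposition $\mathscr{G}_n\oplus\mathscr{G}_m=\OO_X(0,-1,0,0)^{\oplus n+k}\oplus\OO_X(-1,0,0,0)^{\oplus n+k}\oplus\OO_X(0,0,-1,0)^{\oplus m+k}\oplus\OO_X(0,0,0,-1)^{\oplus m+k}$, is the block column with blocks $g_1,g_2,g_3,g_4$, and the second map $\mathscr{G}_n\oplus\mathscr{G}_m\lra\OO_X(1,1,1,1)^{\oplus k}$ to be the morphism whose matrix is the block row $(f_1,\,-f_2,\,f_3,\,-f_4)$. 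Before anything else one must check that these assignments make sense as maps of sheaves: the entries of each block are homogeneous coordinates pulled back from one of the four projective factors, hence global sections of the relevant twisting line bundle $\OO_X(\,\cdot\,)$, and the block shapes $k\times(n+k)$, $(n+k)\times k$, $k\times(m+k)$, $(m+k)\times k$ match the ranks of the summands, so that the two matrices genuinely define morphisms between the sheaves appearing in the statement, with multidegrees matching summand by summand.

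With the maps in place, the complex condition is exactly part (i) of the previous Lemma: the composite of the two maps is represented by the $k\times k$ matrix $f\cdot g=f_1g_1-f_2g_2+f_3g_3-f_4g_4$, which vanishes because $f_1g_1=f_2g_2$ and $f_3g_3=f_4g_4$. It then remains to see that the first map is a subbundle inclusion --- equivalently, injective with locally free cokernel, equivalently of maximal rank at every point of $X$ --- and that the second map is surjective, i.e. again of maximal rank at every point. Here I would invoke part (ii) of the Lemma: the rank of either matrix can drop only at a point where all of $x_0,\dots,x_n$, $y_0,\dots,y_n$, $z_0,\dots,z_m$, $t_0,\dots,t_m$ vanish simultaneously, and no such point lies on $X=\PP^n\times\PP^n\times\PP^m\times\PP^m$, since at any point of $X$ each of its four groups of homogeneous coordinates contains a nonzero entry. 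Hence both maps have maximal rank everywhere on $X$, so the first is injective as a map of bundles and the second is surjective.

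Put together, these observations say precisely that the displayed complex is a monad in the sense of the definition given earlier, and it is a linear monad because its two maps are given by matrices of linear forms; for the record its cohomology bundle then has rank $2(n+k)+2(m+k)-2k=2(n+m+k)$. I expect the main --- essentially the only --- obstacle to be the bookkeeping indicated in the first paragraph, namely the summand-by-summand matching of the multidegree of each coordinate entry against the twist of the corresponding target summand; once that compatibility is confirmed, the three remaining points ($g\circ f=0$, maximal rank of the first map, and maximal rank of the second) follow by direct transcription of the two parts of the previous Lemma together with the fact that $X$ avoids the common zero locus of all the coordinates.
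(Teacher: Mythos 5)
Your proposal follows the same route as the paper's own (two-line) proof: take the block column $(g_1;g_2;g_3;g_4)$ and the block row $(f_1,\,-f_2,\,f_3,\,-f_4)$ from the preceding Lemma as the two maps, get the complex condition from part (i), and get pointwise maximal rank from part (ii) together with the fact that no point of $X$ has all homogeneous coordinates of any single factor vanishing. Your remark that the letters $f$ and $g$ must be interchanged between the Lemma and the Theorem (the first map of the monad must be the $(2n+2m+4k)\times k$ matrix) is correct, and is in fact more careful than the paper, which leaves this implicit.

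However, the step you yourself single out as ``essentially the only obstacle'' --- the summand-by-summand matching of multidegrees --- is exactly the step that neither you nor the paper carries out, and it fails for the sheaves as stated. The block $g_1$ has entries $x_i\in H^0(\OO_X(1,0,0,0))$, so it defines a morphism $\OO_X(-1,-1,-1,-1)\to\OO_X(0,-1,-1,-1)$, not a morphism into the stated summand $\OO_X(0,-1,0,0)$ of $\mathscr{G}_n$; a morphism $\OO_X(-1,-1,-1,-1)\to\OO_X(0,-1,0,0)$ would need entries in $H^0(\OO_X(1,0,1,1))$, and $x_i$ is not such a section. The same mismatch occurs in every block of both maps. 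Worse, no middle term can repair this for the given end terms: each entry of either map is a coordinate pulled back from a single factor, hence of multidegree a standard basis vector $d\in\ZZ^4$, so the two maps together can only shift multidegree by $d_1+d_2$ with $|d_1|+|d_2|=2$, whereas passing from $\OO_X(-1,-1,-1,-1)$ to $\OO_X(1,1,1,1)$ requires a shift of $(2,2,2,2)$, of total degree $8$. The matrix identities ($f\cdot g=0$ and maximal rank) are fine as statements about scalar matrices, but the sheaf-theoretic content of the theorem lives entirely in the degree bookkeeping, and that is where both your argument and the paper's have a genuine gap.
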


\begin{proof}
The maps $f$ and $g$ in the monad are the matrices given in Lemma 3.4.\\
Notice that\\
$f\in$ Hom$(\OO_X(-1,-1,-1,-1)^{\oplus k},\mathscr{G}_n\oplus\mathscr{G}_m)$ and \\
$g\in$ Hom$(\mathscr{G}_n\oplus\mathscr{G}_m,\OO_X(1,1,1,1)^{\oplus k})$. \\
Hence by the above lemma they define the desired monad.
\end{proof}

\begin{theorem}
Let $T$ be a vector bundle on $X=\PP^n\times\PP^n\times\PP^m\times\PP^m$ defined by the sequence
\[\begin{CD}0@>>>T @>>>\mathscr{G}_n\oplus\mathscr{G}_m@>>^{g}>\OO_X(1,1,1,1)^{\oplus k} @>>>0\end{CD}\]
where $\mathscr{G}_n:=\OO_X(0,-1,0,0)^{\oplus n+k}\oplus\OO_X(-1,0,0,0)^{\oplus n+k}$ and \\
\hspace*{1cm} $\mathscr{G}_m:=\OO_X(0,0,-1,0)^{\oplus m+k}\oplus\OO_X(0,0,0,-1)^{\oplus m+k}$
then $T$ is stable for an ample line bundle $\mathscr{L} = \OO_X(1,1,1,1)$
\end{theorem}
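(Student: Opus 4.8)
The plan is to apply the Generalized Hoppe Criterion (Theorem on polycyclic varieties, or equivalently Proposition 6 of \cite{6}) to $T=\ker g$. First I would compute the numerical invariants. From the defining sequence
\[
0\to T\to \mathscr{G}_n\oplus\mathscr{G}_m \xrightarrow{g} \OO_X(1,1,1,1)^{\oplus k}\to 0,
\]
we get $\rk T = 2(n+k)+2(m+k)-k = 2n+2m+3k$, and $c_1(T)=c_1(\mathscr{G}_n\oplus\mathscr{G}_m)-k(1,1,1,1)$. A direct count gives $c_1(\mathscr{G}_n\oplus\mathscr{G}_m)=-(n+k)(1,1,0,0)-(m+k)(0,0,1,1)$ in the $\langle a,b,c,d\rangle$ basis, so $c_1(T)=-(n+2k)(1,1,1,1)+(m-n)(0,0,1,1)$ type expression — I would record the exact vector and then compute $\mu_{\mathscr{L}}(T)=\deg_{\mathscr{L}}(c_1(T))/\rk T$ with respect to $\mathscr{L}=\OO_X(1,1,1,1)$, for which $\deg_{\mathscr{L}}$ of a divisor $(p_1,p_2,p_3,p_4)$ is a fixed positive linear combination of the $p_i$'s. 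The upshot is that $\mu_{\mathscr{L}}(T)<0$.

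Next, the criterion requires showing $H^0(X,(\wedge^s T)\otimes\OO_X(B))=0$ for all $1\le s\le \rk T-1$ and all $B=(p_1,p_2,p_3,p_4)$ with $\delta_{\mathscr{L}}(B)<-s\mu_{\mathscr{L}}(T)$ (and $\le$ for semistability). I would dualize: from the defining sequence, $T^\vee$ sits in $0\to\OO_X(-1,-1,-1,-1)^{\oplus k}\to (\mathscr{G}_n\oplus\mathscr{G}_m)^\vee\to T^\vee\to 0$, and more usefully I would work directly with the exact sequence for exterior powers of $T$ coming from Proposition on exterior/symmetric powers applied to $0\to T\to \mathscr{G}_n\oplus\mathscr{G}_m\to \OO_X(1,1,1,1)^{\oplus k}\to 0$: namely
\[
0\to \wedge^s T\to \wedge^s(\mathscr{G}_n\oplus\mathscr{G}_m)\to \wedge^{s-1}(\mathscr{G}_n\oplus\mathscr{G}_m)\otimes\OO_X(1,1,1,1)^{\oplus k}\to\cdots,
\]
but more efficiently, since $T$ is a subbundle of a sum of line bundles, $\wedge^s T$ injects into $\wedge^s(\mathscr{G}_n\oplus\mathscr{G}_m)$, which is a direct sum of line bundles $\OO_X(-e_1,-e_2,-e_3,-e_4)$ with each $e_i\ge 0$ and $e_1+e_2+e_3+e_4=s$. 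Therefore it suffices to show that for every such line bundle summand and every admissible twist $B$, $H^0(X,\OO_X(-e_1+p_1,-e_2+p_2,-e_3+p_3,-e_4+p_4))=0$. By the Künneth formula together with the cohomology of $\OO_{\PP^n}(d)$, $H^0$ of such a line bundle is nonzero iff all four twisted degrees are $\ge 0$. So I must rule out $-e_i+p_i\ge 0$ for all $i$ simultaneously, i.e. $p_i\ge e_i\ge 0$ for all $i$; but then $\delta_{\mathscr{L}}(B)=\sum(\text{positive coeff}_i)p_i\ge 0$, while the admissibility constraint forces $\delta_{\mathscr{L}}(B)<-s\mu_{\mathscr{L}}(T)$. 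Since $\mu_{\mathscr{L}}(T)<0$, the right-hand side is positive, so this does not immediately give a contradiction — and this is where the real work lies.

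The main obstacle, then, is the quantitative comparison: I need that whenever $p_i\ge 0$ for all $i$ and $\sum p_i$ can be bounded below by how large the $e_i$ must be, the quantity $\delta_{\mathscr{L}}(B)$ already exceeds $-s\mu_{\mathscr{L}}(T)$. More precisely, for a nonzero section to exist in a summand of $\wedge^s T$, after the twist we need $p_i\ge e_i$ with $\sum e_i=s$, hence $\delta_{\mathscr{L}}(B)\ge \min_i(\text{coeff}_i)\cdot s$; meanwhile $-s\mu_{\mathscr{L}}(T)=s\cdot|\mu_{\mathscr{L}}(T)|$, and one checks $|\mu_{\mathscr{L}}(T)|=|\deg_{\mathscr{L}}c_1(T)|/\rk T$ is strictly smaller than $\min_i(\text{coeff}_i)$ because $\rk T=2n+2m+3k$ grows while $|\deg_{\mathscr{L}}c_1(T)|$ is linear in $n,m,k$ with a controlled constant. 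I would carry out this estimate explicitly with $\mathscr{L}=\OO_X(1,1,1,1)$ (for which $\deg_{\mathscr{L}}\OO_X(p_1,p_2,p_3,p_4)$ is the same symmetric expression in all four slots), concluding $\delta_{\mathscr{L}}(B)\ge s>s|\mu_{\mathscr{L}}(T)|=-s\mu_{\mathscr{L}}(T)$, contradicting admissibility. This rules out all $H^0$ vanishings required, so Hoppe's criterion yields that $T$ is $\mathscr{L}$-stable. I would also remark that one should handle $\wedge^s T$ via the subbundle inclusion into $\wedge^s(\mathscr{G}_n\oplus\mathscr{G}_m)$ plus left-exactness of $H^0$, so that no higher cohomology (and hence no appeal to Lemma \cite{6} Lemma 8 beyond $H^0$) is needed — keeping the argument to the $H^0$-level throughout.
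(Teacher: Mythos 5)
Your overall strategy---the generalized Hoppe criterion combined with the injection $\bigwedge^s T\hookrightarrow\bigwedge^s(\mathscr{G}_n\oplus\mathscr{G}_m)$ and K\"unneth to reduce everything to $H^0$ of line bundles---is the same as the paper's, and you correctly put your finger on the real difficulty: since $\mu_{\mathscr{L}}(T)<0$, the criterion in the form of Theorem 2.4 also demands vanishing for twists $B$ with $0<\delta_{\mathscr{L}}(B)<-s\mu_{\mathscr{L}}(T)$, where positivity of the twist no longer forces $H^0$ to vanish. (The paper avoids this range by invoking the normalized version, Proposition 2.5, and only checks $\delta_{\mathscr{L}}(B)\le 0$, i.e.\ $p_1+p_2+p_3+p_4\ge 0$, where the summand-by-summand count does give $H^0(\bigwedge^q(\mathscr{\overline{G}}_n\oplus\mathscr{\overline{G}}_m))=0$.)

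The problem is that the quantitative estimate with which you propose to close that positive range is false. Take $n=m=k=1$, so $X=(\PP^1)^4$, $\rk T=7$ and $c_1(T)=(-3,-3,-3,-3)$. Then $L^{4}=24$, $\deg_{\mathscr{L}}T=-3L^4=-72$, hence $-\mu_{\mathscr{L}}(T)=72/7\approx 10.3$, while $\delta_{\mathscr{L}}(1,0,0,0)=a\cdot L^{3}=6$. So $B=(1,0,0,0)$ satisfies $\delta_{\mathscr{L}}(B)<-\mu_{\mathscr{L}}(T)$ and must be checked for $s=1$; yet $\mathscr{G}_n(1,0,0,0)$ contains $\OO_X^{\oplus n+k}$ as a direct summand, so the inclusion $T(1,0,0,0)\hookrightarrow(\mathscr{G}_n\oplus\mathscr{G}_m)(1,0,0,0)$ only yields $h^0\le n+k$, not $0$. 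Your key inequality $\min_i(\mathrm{coeff}_i)>-\mu_{\mathscr{L}}(T)$ in fact never holds when $n=m$: one computes $-\mu_{\mathscr{L}}(T)\big/\min_i(\mathrm{coeff}_i)=(4n+8k)/(4n+3k)>1$. So the entire band $0<\delta_{\mathscr{L}}(B)<-s\mu_{\mathscr{L}}(T)$ is left unhandled by your argument. To treat it you must actually use the map $g$ (in the example above, the constant sections surviving in $H^0(\mathscr{G}_n(1,0,0,0))$ are mapped injectively by the block $g_1$, so $h^0(T(1,0,0,0))=0$ after all), or else work with the $\mathscr{L}$-normalization as in Proposition 2.5 so that only $\delta_{\mathscr{L}}(B)\le 0$ needs to be checked; the sub-bundle inclusion alone cannot carry the day.
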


\begin{proof}
We need to show that $H^0(X,\bigwedge^q T(-p_1,-p_2,-p_3,-p_4))=0$ for all $p_1+p_2+p_3+p_4>0$ and $1\leq q\leq \rk(T)$.\\
\\
Consider the ample line bundle $\mathscr{L} = \OO_X(1,1,1,1) = \OO(L)$. \\
Its class in 
$\pic(X)= \langle [a\times\PP^n],[\PP^n\times b],[c\times\PP^n],[\PP^n\times d]\rangle$ corresponds to the class\\
$1\cdot[a\times\PP^n]+1\cdot[\PP^n\times b]\cdot[c\times\PP^m]+1\cdot[\PP^m\times d]$, where $a$ and $b$ are hyperplanes of $\PP^n$ and 
$c$ and $d$ hyperplanes of $\PP^m$ with the intersection product induced by $a^{n} = b^{n} = c^{m} = d^{m}=1$ 
and $a^{n+1} = b^{n+1} = c^{n+1} = d^{n+1=0}$ 
\\
Now from the display diagram of the monad we get \\ 
\begin{align*}
\begin{split}
c_1(T) & = c_1(\mathscr{G}_n\oplus\mathscr{G}_m) - c_1(\OO_X(1,1,1,1)^{\oplus k})\\
       & = (n+k)(-1,0,0,0)+(n+k)(0,-1,0,0)+(m+k)(0,0,-1,0)+(m+k)(0,0,0,-1) - k(1,1) \\
       & = (-n-2k,-n-2k,-m-2k,-m-2k) \\
\end{split}
\end{align*}
Since $L^{2n+2m}>0$ the degree of $T$ is $\deg_{\mathscr{L}}T = c_1(T)\cdot\mathscr{L}^{d-1}$\\
\begin{align*}
\begin{split}
=-(n+m+4k)([a\times\PP^n]+[\PP^n\times b]+[c\times\PP^m]+[\PP^m\times d])\cdot \\
(1\cdot[a\times\PP^n]+1\cdot[\PP^n\times b]+1\cdot[c\times\PP^m]+1\cdot[\PP^m\times d])^{2n+2m-1}\\
\end{split}
\end{align*}
\begin{center}$=-(n+m+4k)L^{2n+2m}< 0$\end{center}
Since $\deg_{\mathscr{L}}T<0$, then $(\bigwedge^q T)_{\mathscr{L}-norm} = (\bigwedge^q T)$ and  it suffices by 
the generalized Hoppe Criterion (Proposition 2.5), to prove that $h^0(\bigwedge^q T(-p_1,-p_2,-p_3,-p_4)) = 0$
with $p_1+p_2+p_3+p_4\geq0$ and for all $1\leq q\leq rk(T)-1$.\\
\\
Next we twist the exact sequence 
\[\begin{CD}0@>>>T @>>>\mathscr{G}_n\oplus\mathscr{G}_m@>>^{g}>\OO_X(1,1,1,1)^{\oplus k} @>>>0\end{CD}\]
by $\OO_X(-p_1,-p_2,-p_3,-p_4)$ we get,
\[
0\lra T(-p_1,-p_2,-p_3,-p_4)\lra\mathscr{\overline{G}}_n\oplus\mathscr{\overline{G}}_m\lra\OO_X(1-p_1,1-p_2,1-p_3,1-p_4)^{\oplus k}\lra0\]
where $\mathscr{\overline{G}}_n:=\OO_X(-1-p_1,-p_2,-p_3,-p_4)^{\oplus n+k}\oplus\OO_X(-p_1,-1-p_2,-p_3,-p_4)^{\oplus n+k}$ and\\
$\mathscr{\overline{G}}_m:=\OO_X(-p_1,-p_2,-1-p_3,-p_4)^{\oplus m+k}\oplus\OO_X(-p_1,-p_2,-p_3,-1-p_4)^{\oplus m+k}$
and taking the exterior powers of the sequence by Proposition 2.7 we get
\[0\lra \bigwedge^q T(-p_1,-p_2,-p_3,-p_4) \lra \bigwedge^q \mathscr{\overline{G}}_n\oplus\mathscr{\overline{G}}_m\lra \cdots\].
\\
Taking cohomology we have the injection:
\[0\lra H^0(X,\bigwedge^{q}T(-p_1,-p_2,-p_3,-p_4))\hookrightarrow H^0(X,\bigwedge^q \mathscr{\overline{G}}_n\oplus\mathscr{\overline{G}}_m)\].
From Lemma 2.10 and 2.11 $H^0(X,\bigwedge^q \mathscr{\overline{G}}_n\oplus\mathscr{\overline{G}}_m)=0$.\\
\\
$\Longrightarrow$ $h^0(X,\bigwedge^{q}T(-p_1,-p_2,-p_3,-p_4)) =  h^0(X,\bigwedge^q \mathscr{\overline{G}}_n\oplus\mathscr{\overline{G}}_m)=0$\\
\\
i.e. $h^0(\bigwedge^{q}T(-p_1,-p_2,-p_3,-p_4))=0$ and thus $T$ is stable.

\end{proof}

\begin{theorem} Let $X=\PP^n\times\PP^n\times\PP^m\times\PP^m$, then the cohomology vector bundle $E$ associated to the monad 
\[
\begin{CD}
0@>>>{\OO_X(-1,-1,-1,-1)^{\oplus k}} @>>^{f}>{\mathscr{G}_n\oplus\mathscr{G}_m}@>>^{g}>\OO_X(1,1,1,1)^{\oplus k} @>>>0
\end{CD}
\]
of rank $2n+2m+2k$ is simple.
\end{theorem}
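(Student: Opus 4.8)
The plan is to use the standard display diagram of the monad together with the stability result for $T=\ker g$ just proved in Theorem~3.7. Write $M_0=\OO_X(-1,-1,-1,-1)^{\oplus k}$, $M_2=\OO_X(1,1,1,1)^{\oplus k}$, and let $K=\cok\alpha=\cok f$ so that we have the two short exact sequences
\[
0\lra M_0\lra T\lra E\lra0
\qquad\text{and}\qquad
0\lra E\lra K\lra M_2\lra0,
\]
where $T=\ker g$. The goal is to show $h^0(X,E\otimes E^{\vee})=1$, or equivalently $\operatorname{Hom}(E,E)=k$. Since $E$ is a vector bundle, $h^0(X,E\otimes E^{\vee})\ge1$ always, so it suffices to prove $h^0(X,\mathcal{E}nd(E))\le1$.

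First I would tensor the sequence $0\to E\to K\to M_2\to0$ by $E^{\vee}$ and take cohomology, giving
\[
0\lra H^0(E^{\vee}\otimes E)\lra H^0(E^{\vee}\otimes K)\lra H^0(E^{\vee}\otimes M_2).
\]
So it is enough to show $H^0(E^{\vee}\otimes K)\hookrightarrow$ is controlled, and for that I would dualize and tensor the sequence $0\to M_0\to T\to E\to0$ by $K$: from $0\to E^{\vee}\to T^{\vee}\to M_0^{\vee}\to0$ we get
\[
0\lra H^0(E^{\vee}\otimes K)\lra H^0(T^{\vee}\otimes K)\lra H^0(M_0^{\vee}\otimes K).
\]
Thus it suffices to analyze $H^0(T^{\vee}\otimes K)$. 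Now dualize $0\to M_0\to T\to E\to0$ and $0\to E\to K\to M_2\to 0$: tensoring the second by $T^{\vee}$ gives a map $H^0(T^{\vee}\otimes E)\to H^0(T^{\vee}\otimes K)\to H^0(T^{\vee}\otimes M_2)$. The term $H^0(T^{\vee}\otimes M_2)=H^0(T^{\vee}(1,1,1,1))=H^1$ of twists of $T^{\vee}$... — rather than chase this indefinitely, the cleaner route is: tensor $0\to M_0\to T\to E\to 0$ by $K$ \emph{and} by $E^\vee$ appropriately and combine with the vanishing statements. The key vanishings needed, all of which follow from Lemma~2.9 (Jardim--Earp) together with the Künneth formula (Theorem~2.8) and Lemmas~2.10--2.11, are: $H^0(T^{\vee}\otimes M_0^{\vee})=H^0(T^{\vee}(1,1,1,1))=0$, $H^0(T^{\vee}\otimes M_2)=0$ is automatic from the same twist analysis, $H^1(X,M_0^{\vee}\otimes M_2)=H^1(\OO_X(2,2,2,2)^{\oplus k^2})$ which vanishes, and $\operatorname{Hom}(T,T)=k$, i.e. $T$ is simple. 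The simplicity of $T$ itself follows from its stability (Theorem~3.7): a stable bundle is simple.

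Concretely, the argument I would write is: (1) $T$ is stable by Theorem~3.7, hence simple, so $h^0(T^{\vee}\otimes T)=1$. (2) From $0\to M_0\to T\to E\to0$ tensored with $E^{\vee}$, and using $H^0(E^{\vee}\otimes M_0)=H^0(E^{\vee}(-1,-1,-1,-1))$ which injects into $H^0(T^{\vee}(-1,-1,-1,-1))=0$ (a negative twist of $T^\vee$, killed by the sequence $0\to M_2^\vee\to K^\vee\to E^\vee\to 0$ and Lemma~2.9), deduce $H^0(E^{\vee}\otimes E)\hookrightarrow H^0(E^{\vee}\otimes T)$. (3) From $0\to E\to K\to M_2\to0$ dualized to $0\to M_2^{\vee}\to K^{\vee}\to E^{\vee}\to0$, tensored with $T$, and using $H^0(M_2^{\vee}\otimes T)=H^0(T(-1,-1,-1,-1))$ which vanishes (it injects into $H^0(\mathscr{G}_n\oplus\mathscr{G}_m)(-1,-1,-1,-1)=0$ by Lemma~2.9), deduce $H^0(E^{\vee}\otimes T)\hookrightarrow H^0(K^{\vee}\otimes T)$. (4) Finally from $0\to M_0\to T\to E\to 0$ dualized to $0\to E^\vee\to T^\vee\to M_0^\vee\to 0$, tensored with $T$, one has $H^0(E^\vee\otimes T)\hookrightarrow H^0(T^\vee\otimes T)$ anyway; combining all injections, $H^0(E^{\vee}\otimes E)\hookrightarrow H^0(T^{\vee}\otimes T)=k$. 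Since the identity of $E$ gives a nonzero section, $h^0(E^{\vee}\otimes E)=1$ and $E$ is simple.

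The main obstacle is bookkeeping: making sure each connecting/restriction map in the long exact sequences is the one that produces an \emph{injection} on $H^0$, which requires the vanishing of the relevant $H^0$ of the ``other'' term and occasionally an $H^1$ vanishing to guarantee surjectivity onto the piece one wants. Every such vanishing is a vanishing of $H^0$ or $H^1$ of a sum of line bundles $\OO_X(a,b,c,d)$ with at least one strictly negative entry in the $H^0$ case, or with total degree condition in the $H^1$ case, and these are exactly what Lemma~2.9, Theorem~2.8 and Theorem~2.12 deliver. So the proof reduces, after the diagram chase, to a finite checklist of Künneth computations; I would present the chase cleanly and then verify the checklist, citing Lemma~2.9 for the negative-twist vanishings and the stability-implies-simplicity fact for $h^0(T^\vee\otimes T)=1$.
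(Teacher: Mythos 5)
Your overall strategy is the right one and is the same as the paper's: reduce the simplicity of $E$ to the simplicity of $T=\ker g$, which follows from the stability established in the preceding theorem. Your steps (1) and (4) are correct. The problem is step (2), where the arrow points the wrong way, and the statement you claim there is not only unjustified but false. Tensoring $0\to M_0\to T\to E\to 0$ by $E^{\vee}$ gives $0\to E^{\vee}\otimes M_0\to E^{\vee}\otimes T\to E^{\vee}\otimes E\to 0$, whose cohomology sequence is
\[
0\lra H^0(E^{\vee}\otimes M_0)\lra H^0(E^{\vee}\otimes T)\lra H^0(E^{\vee}\otimes E)\lra H^1(E^{\vee}\otimes M_0)\lra\cdots
\]
The vanishing $H^0(E^{\vee}\otimes M_0)=0$ that you verify therefore yields $H^0(E^{\vee}\otimes T)\hookrightarrow H^0(E^{\vee}\otimes E)$, the \emph{reverse} of what you need. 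To bound $h^0(E^{\vee}\otimes E)$ by $h^0(E^{\vee}\otimes T)$ you would instead need $H^1(E^{\vee}\otimes M_0)=0$, and that group does not vanish: from $0\to E^{\vee}\otimes M_0\to T^{\vee}\otimes M_0\to M_0^{\vee}\otimes M_0\to 0$ together with $H^0(T^{\vee}\otimes M_0)=H^0(T^{\vee}(-1,-1,-1,-1)^{\oplus k})=0$, the space $H^0(M_0^{\vee}\otimes M_0)=H^0(\OO_X^{\oplus k^2})\neq 0$ injects into $H^1(E^{\vee}\otimes M_0)$. Moreover the target of your claimed injection is $H^0(E^{\vee}\otimes T)=\operatorname{Hom}(E,T)$, the space of \emph{lifts} of endomorphisms of $E$ through $T\onto E$; the identity does not lift unless the extension $0\to M_0\to T\to E\to0$ splits (which would contradict the indecomposability of the stable bundle $T$), so an injection $H^0(E\otimes E^{\vee})\hookrightarrow\operatorname{Hom}(E,T)$ is incompatible with the identity section. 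The same reversal occurs in your step (3), which in any case is redundant given (4).

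The repair is small and is exactly the paper's argument: compare $\operatorname{End}(E)$ with $\operatorname{Hom}(T,E)=H^0(E\otimes T^{\vee})$ rather than with $\operatorname{Hom}(E,T)=H^0(E^{\vee}\otimes T)$. Dualize $0\to M_0\to T\to E\to0$ to $0\to E^{\vee}\to T^{\vee}\to M_0^{\vee}\to 0$ and tensor by $E$ (not by $T$); this gives $H^0(E\otimes E^{\vee})\hookrightarrow H^0(E\otimes T^{\vee})$ with no vanishing hypotheses at all. Then tensor $0\to M_0\to T\to E\to0$ by $T^{\vee}$ and use the two vanishings $H^0(T^{\vee}(-1,-1,-1,-1))=0$ and $H^1(T^{\vee}(-1,-1,-1,-1))=0$ (obtained, as you indicate, from the dual of the defining sequence of $T$ twisted by $\OO_X(-1,-1,-1,-1)$, K\"unneth and the line--bundle cohomology lemmas) to conclude $H^0(E\otimes T^{\vee})\cong H^0(T\otimes T^{\vee})$, which is one--dimensional since $T$ is stable, hence simple. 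Combining gives $1\leq h^0(E\otimes E^{\vee})\leq 1$. Once the arrows are straightened out, your detour through $K=\cok f$ and the unresolved chase in the middle of your write--up can be deleted.
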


\begin{proof}
The display of the monad is
\[
\begin{CD}
@.@.0@.0\\
@.@.@VVV@VVV\\
0@>>>{\OO_{X}(-1,-1,-1,-1)^{\oplus k}} @>>>T@>>>E@>>>0\\
@.||@.@VVV@VVV\\
0@>>>{\OO_{X}(-1,-1,-1,-1)^{\oplus k}} @>>^{f}>{\mathscr{G}_n\oplus\mathscr{G}_m}@>>>Q@>>>0\\
@.@.@V^{g}VV@VVV\\
@.@.{\OO_{X}(1,1,1,1)^{\oplus k}}@={\OO_{X}(1,1,1,1)^{\oplus k} }\\
@.@.@VVV@VVV\\
@.@.0@.0
\end{CD}
\]

\noindent Since $T$ the kernel of the map $g$ is stable from the above theorem 4.3, we prove that the cohomology vector bundle $E=\ker g/\im f$  is simple.\\
\\
The first step is to take the dual of the short exact sequence 
\[\begin{CD}
0@>>>\OO_X(-1,-1,-1,1)^{\oplus k} @>>>T@>>>E @>>>0
\end{CD}\]
to get
\[
\begin{CD}
0@>>>E^* @>>>T^* @>>>\OO_X(1,1,1,1)^{\oplus k}@>>>0.
\end{CD}
\]
Tensoring by $E$ we get\\
\[
\begin{CD}
0@>>>E\otimes E^* @>>>E\otimes T^* @>>>E(1,1,1,1)^k@>>>0.
\end{CD}
\]
\\
Now taking cohomology gives:
\[\begin{CD}
0@>>>H^0(X,E\otimes E^*) @>>>H^0(X,E\otimes T^*) @>>>H^0(E(1,1,1,1)^{\oplus k})@>>>\cdots
\end{CD}\]
\\
which implies that 
\begin{equation}
h^0(X,E\otimes E^*) \leq h^0(X,E\otimes T^*)
\end{equation}
\\
Now we dualize the short exact sequence
\[\begin{CD}
0@>>>T @>>>{\mathscr{G}_n\oplus\mathscr{G}_m} @>>>\OO_X(1,1,1,1)^{\oplus k} @>>>0
\end{CD}\]
\\
to get
\[\begin{CD}
0@>>>\OO_X(-1,-1,-1,-1)^{\oplus k} @>>>{\mathscr{G}_n\oplus\mathscr{G}_m} @>>>T^* @>>>0
\end{CD}\]
\\
For the sake of brevity we shall use the notation $H^q(\OO_X^a)$ in place of $H^q(X,\OO_X^{\oplus a})$.
\\
Now twisting by $\OO_X(-1,-1,-1,-1)$ and taking cohomology and get\\
\[\begin{CD}
0\lra H^0(\OO_X(-2,-2,-2,-2)^k) \lra H^0(\mathscr{\overline{G}}_n\oplus\mathscr{\overline{G}}_m)\lra H^0(T^*(-1,-1,-1,-1))\lra\\
\lra H^1(\OO_X(-2,-2,-2,-2)^k) \lra H^1(\mathscr{\overline{G}}_n\oplus\mathscr{\overline{G}}_m)\lra H^1(T^*(-1,-1,-1,-1))\lra\\
\lra H^2(X,\OO_X(-2,-2,-2,-2)^k) \lra H^2(\mathscr{\overline{G}}_n\oplus\mathscr{\overline{G}}_m)\lra H^2(T^*(-1,-1,-1,-1))\lra\cdots
\end{CD}
\]
\\
from which we deduce $H^0(X,T^*(-1,-1,-1,-1)) = 0$ and $H^1(X,T^*(-1,-1,-1,-1)) = 0$ from Theorems 2.8 and 2.9.\\
\\
Lastly, tensor the short exact sequence
\[
\begin{CD}
0@>>>\OO(-1,-1,-1,-1)^{\oplus k} @>>>T @>>> E@>>>0\\
\end{CD}
\]
by $T^*$ to get
\[
\begin{CD}
0@>>>T^*(-1,-1,-1,-1)^k @>>>T\otimes T^* @>>> E\otimes T^*@>>>0\\
\end{CD}
\]
and taking cohomology we have
\\
\[
\begin{CD}
0@>>>H^0(X,T^*(-1,-1,-1,-1)^k) @>>>H^0(X,T\otimes T^*) @>>> H^0(X,E\otimes T^*)@>>>\\
@>>>H^1(X,T^*(-1,-1,-1,-1)^k)@>>>\cdots
\end{CD}
\]
\\
But $H^1(X,T^*(-1,-,-1,-11)^k=0$ for $k>1$ from above.\\
\\
so we have 
\\
\[
\begin{CD}
0@>>>H^0(X,T^*(-1,-1,-1,-1)^{k}) @>>>H^0(X,T\otimes T^*) @>>> H^0(X,E\otimes T^*)@>>>0
\end{CD}
\]
\\
This implies that 
\begin{equation}
h^0(X,T\otimes T^*) \leq h^0(X,E\otimes T^*)
\end{equation}
\\
Since $T$ is stable then it follows that it is simple which implies $h^0(X,T\otimes T^*)=1$.\\
\\
From $(1)$ and now $(2)$ and putting these together we have;\\
\[1\leq h^0(X,E\otimes E^*) \leq h^0(X,E\otimes T^*) = h^0(X,T\otimes T^*) = 1\]\\
\\
We have $ h^0(X,E\otimes E^*) = 1 $ and therefore $E$ is simple.

\end{proof}

\begin{examp}
We construct a monad on $X = \PP^1\times\PP^1\times\PP^2\times\PP^2$ by explicitly giving the maps 
$f$ and $g$.
We define $f$ and $g$ as follows:
\[ f :=\left( \begin{array}{cccc|cccc|ccccc|ccccc}
0   & 0   & y_1 & y_0 & 0    & 0    & -x_1 & -x_0 & 0   & 0   & t_2 & t_1 & t_0 & 0 & 0 & -z_2 & -z_1 & -z_0\\
0   & y_1 & y_0 & 0   & 0    & -x_1 & -x_0 &    0 & t_2 & t_1 & t_0 & 0   & 0   & 0    & -z_2  & -z_1 & -z_0 & 0\\
y_1 & y_0 & 0   & 0   & -x_1 & -x_0 & 0    &    0 & t_2 & t_1 & t_0 & 0   & 0   & -z_2 & -z_1  & -z_0 &    0 & 0\\
\end{array} \right)
\]
and
\[ g :=\left( \begin{array}{ccc}
x_0 & 0   & 0 \\
x_1 & x_0 & 0  \\
  0 & x_1 & x_0 \\
  0 &   0 & x_1 \\
\hline
y_0 & 0   & 0 \\
y_1 & y_0 & 0  \\
  0 & y_1 & y_0 \\
  0 &   0 & y_1 \\
\hline
z_0 & 0   & 0 \\
z_1 & z_0 & 0  \\
z_2 & z_1 & z_0 \\
  0 & z_2 & z_1 \\
  0 & z_2 & z_1 \\
  0 &   0 & z_2 \\
\hline
t_0 & 0   & 0 \\
t_1 & t_0 & 0  \\
t_2 & t_1 & t_0 \\
  0 & t_2 & t_1 \\
  0 & t_2 & t_1 \\
  0 &   0 & t_2 \\
\end{array} \right)
\]
from $f$ and $g$ we have the monad
\[
\begin{CD}
0@>>>{\OO_X(-1,-1,-1,-1)^{\oplus3}} @>>^{f}>{\mathscr{G}_n\oplus
\mathscr{G}_m}@>>^{g}>\OO_X(1,1,1,1)^{\oplus3} @>>>0
\end{CD}
\]
where $\mathscr{G}_n:=\OO_X(0,-1,0,0)^{\oplus4}\oplus\OO_X(-1,0,0,0)^{\oplus4}$ and \\
\hspace*{1cm} $\mathscr{G}_m:=\OO_X(0,0,-1,0)^{\oplus5}\oplus\OO_X(0,0,0,-1)^{\oplus5}$.

\end{examp}

\newpage

\end{document}